\newtheorem{prop}{Proposition}
\newtheorem{thm}{Theorem}
\newtheorem{cor}{Corollary}
\newtheorem{lemma}{Lemma}
\theoremstyle{definition}
\newtheorem{defn}{Definition}
\newtheorem{example}{Example}
\newcommand\C{{\mathbb C}}
\newcommand\N{{\mathbb N}}
\newcommand\F{{\mathrm F}}
\newcommand{\ti}{\vartheta}
\newcommand{\Ti}{\Theta}
\newcommand\cW{{\mathcal W}}
\newcommand\X{{\mathfrak X}}
\newcommand\Z{{\mathbb Z}}
\newcommand\AS{{\mathfrak S}}
\newcommand\BS{{\mathfrak B}}
\newcommand\CS{{\mathfrak C}}
\newcommand\DS{{\mathfrak D}}
\newcommand\XX{{\mathrm X}}
\newcommand\YY{{\mathrm Y}}
\newcommand\J{{\mathrm J}}
\newcommand\fraka{{\mathfrak a}}
\newcommand\frakb{{\mathfrak b}}
\newcommand\dd{{\mathfrak d}}
\newcommand\al{\alpha}
\newcommand\la{\lambda}
\newcommand\s{{\sigma}}
\newcommand\x{{\mathrm{x}}}
\newcommand\y{{\mathrm{y}}}
\newcommand\om{{\varpi}}
\newcommand\ssm{\smallsetminus}
\newcommand\noin{\noindent}
\newcommand\bull{{\scriptscriptstyle \bullet}}
\newcommand\eqto{\stackrel{\lower1.5pt\hbox{$\scriptstyle\sim\,$}}\to}
\newcommand\ov{\overline}
\newcommand\hra{\hookrightarrow}
\newcommand\wh{\widehat}
\newcommand\wt{\widetilde}
\DeclareMathOperator{\Sp}{Sp} 
\DeclareMathOperator{\SL}{SL} \DeclareMathOperator{\GL}{GL}
\DeclareMathOperator{\LG}{LG} \DeclareMathOperator{\IG}{IG}
 \DeclareMathOperator{\OG}{OG}
\DeclareMathOperator{\HH}{\mathrm{H}} 
\DeclareMathOperator{\type}{\mathrm{type}}
\DeclareMathOperator{\rank}{\mathrm{rank}}
\newcommand{\ignore}[1]{}
\begin{document}

\title[A Giambelli formula for classical $G/P$ spaces]
{A Giambelli formula for classical $G/P$ spaces}

\date{March 30, 2014}

\author{Harry~Tamvakis} \address{University of Maryland, Department of
Mathematics, 1301 Mathematics Building, College Park, MD 20742, USA}
\email{harryt@math.umd.edu}

\subjclass[2000]{Primary 14M15; Secondary 05E15, 14M17, 14N15, 05E05}

\thanks{The author was supported in part by NSF Grant DMS-0901341.}

\begin{abstract}
Let $G$ be a classical complex Lie group, $P$ any parabolic subgroup
of $G$, and $G/P$ the corresponding partial flag variety. We prove an
explicit combinatorial Giambelli formula which expresses an arbitrary
Schubert class in $\HH^*(G/P)$ as a polynomial in certain special
Schubert class generators. Our formula extends to one that applies to
the torus-equivariant cohomology ring of $G/P$ and to the setting of
symplectic and orthogonal degeneracy loci.
\end{abstract}

\maketitle

\setcounter{section}{-1}

\section{Introduction}

The Giambelli formula \cite{G} is one of the fundamental
results concerning Schubert calculus in the cohomology ring of the
Grassmannian $\X$.  The variety $\X$ has a decomposition into
Schubert cells, which gives an additive basis of Schubert classes for
the cohomology of $\X$. On the other hand, the ring $\HH^*(\X,\Z)$ is
generated by certain {\em special} Schubert classes, which are the
Chern classes of the universal quotient bundle over $\X$. The formula
of Giambelli expresses a general Schubert class as a determinant of a
Jacobi-Trudi matrix with entries given by special classes. One can
show that this formula is equivalent to the Pieri rule \cite{P}; 
see for instance \cite{T4}.

The Schubert calculus on $\X$ can be generalized to any homogeneous
space $G/P$, where $G$ is a complex reductive Lie group and $P$ a
parabolic subgroup of $G$. However, more than a century since the
theorems of Pieri and Giambelli were discovered, no combinatorially
explicit analogues of these results are known in this generality,
unless the Lie group $G$ is of  type A. One reason for this is that
there is no uniform way to extend the notion of a special Schubert
class over all possible Lie types and parabolics. Another serious 
concern is the more difficult algebro-combinatorial questions that 
arise in the other Lie types, about which more below. 

When $G$ is a {\em classical} Lie group, one can define special
Schubert class generators for the cohomology ring $\HH^*(G/P)$
uniformly, as follows. In this situation, the variety $G/P$
parametrizes partial flags of subspaces of a vector space, which in
types B, C, and D are required to be isotropic with respect to an
orthogonal or symplectic form. First, the special Schubert varieties
on any Grassmannian are defined as the locus of (isotropic) linear
subspaces which meet a given (isotropic or coisotropic) linear
subspace nontrivially, following \cite{BKT1}.  The special Schubert
classes are the cohomology classes determined by these Schubert
varieties.  Finally, the special Schubert classes on a partial flag
variety $G/P$ are the pullbacks of special Schubert classes on
Grassmannians, in agreement with the convention in type A. In most
cases, these special classes are equal to the Chern classes of the
universal quotient bundles over $G/P$, up to a factor of two.  The
{\em Giambelli problem} then is to find an explicit combinatorial
formula which writes a general Schubert class in $\HH^*(G/P)$ as a
polynomial in the special classes. One of our motivations for this is
the fact that the known Giambelli formulas expressed in terms of the
above special Schubert classes have straightforward -- often identical
-- extensions to the small quantum cohomology ring of $G/P$; see
\cite{Bertram, CF, FGP, KTlg, KTog, BKT3}.

The modern formulation of the Giambelli problem is in the setting of
an algebraic family of varying partial flag varieties, with
applications to {\em degeneracy loci} of vector bundles. This story
also has a long history, from the work of Thom-Porteous and
Kempf-Laksov \cite{KL} to the generalizations by Fulton, Pragacz, and
others \cite{Fu1, Fu2, Fu3, Fu4, P1, PR2, LP1, KT, IMN}.  In type A,
the project culminated with the combinatorial understanding of the
polynomials representing quiver loci \cite{BF, BKTY, KMS}. Let $T$
denote a maximal torus and $B$ a Borel subgroup of $G$ with $T\subset
B$. Graham \cite{Gra} observed that the degeneracy locus problem for
the classical groups from \cite{Fu1, Fu2, Fu3} is equivalent to the
Giambelli problem for the Schubert classes in the {\em $T$-equivariant
  cohomology ring} of $G/B$.

The degeneracy locus formulas we obtain here have a similar shape for
all the classical groups, and solve the Giambelli problem for the
$T$-equivariant cohomology of any classical $G/P$ space (when $G$ is
an even orthogonal group, the problem is reduced to the main theorem
of \cite{BKT4}). It should be noted that in type D there are some
unavoidable differences due to the presence of the Euler class
\cite{EGr}. For instance, the Chern classes of the universal vector
bundles over a non-maximal even orthogonal Grassmannian do not
generate its cohomology ring, even with real coefficients; see
\cite{T1, BKT1, BKT4} for a detailed analysis.

The case of the complete flag variety $G/B$, where the cohomology is
generated by Schubert divisors, is more amenable to study.
Bernstein-Gelfand-Gelfand \cite{BGG} and Demazure \cite{D1, D2} used
divided difference operators to construct an algorithm that produces
polynomials which represent the Schubert classes on $G/B$ in the Borel
presentation \cite{Bo} of the cohomology ring.  For the general linear
group, Lascoux and Sch\"utzenberger \cite{LS1} applied this method to
define {\em Schubert polynomials}, a particularly nice choice of
representatives with rich combinatorial properties. A positive
combinatorial formula for the coefficients of Schubert polynomials was
given by Billey, Jockusch, and Stanley \cite{BJS} -- thus resolving
the Giambelli question in the case of $\GL_n/B$. There remained
combinatorial difficulties to generalize this to a Giambelli formula
which holds on any type A partial flag variety. The answer for
$\GL_n/P$ was obtained by Buch, Kresch, Yong, and the author \cite[\S
  5]{BKTY} in the course of their work on the quiver formulas of Buch
and Fulton \cite{BF}.

Pragacz \cite{P2} solved the Giambelli problem for maximal isotropic
orthogonal and symplectic Grassmannians using a Schur Pfaffian
\cite{Sch}. At the other extreme, for the full flag varieties $G/B$ in
types B, C, and D, a family of Schubert polynomials analogous to the
one in type A is not uniquely determined (see \cite{FK2} for a
discussion of this phenomenon and \cite{BH, Fu2, LP1, LP2, T2,
T3} for examples) and the combinatorics is more challenging. One
reason for this is that the strong {\em stability property} of type A
Schubert polynomials under the natural inclusions of the Weyl groups
must be understood differently in types B, C, and D. Another is that
the images of the special Schubert classes in the stable cohomology
rings of isotropic Grassmannians are not algebraically independent.

Giambelli formulas for non-maximal isotropic Grassmannians were
discovered only recently \cite{BKT2, BKT4}. Unlike most previously
known examples, the nature of these formulas is not determinantal --
instead, they are expressed using Young's raising operators
\cite{Y}. The corresponding Giambelli polynomials are the {\em theta}
and {\em eta} polynomials, which play the same role as the Schur
polynomials do for the type A Grassmannian. This theory is essential
for further progress and will be generalized in the present article.
The results provide a combinatorial link between the classical $G/P$
Giambelli problem and the quiver formulas of \cite{BF, BKTY}.

We now state one of our main theorems, referring to \S
\ref{prelims}--\S \ref{sdl} for the precise definitions.  Equip the
vector space $E=\C^{2n}$ with a nondegenerate skew-symmetric bilinear
form.  Fix a sequence $\dd \, :\, d_1 < \cdots < d_p$ of positive
integers with $d_p \leq n$.  Let $\X(\dd)$ be the variety
parametrizing partial flags of subspaces
\begin{equation}
\label{Eseq}
E_\bull \ :\ 0 \subset E_1 \subset \cdots \subset
E_p \subset E
\end{equation} 
with $\dim E_i = d_i$ for each $i$ and $E_p$ isotropic. The Schubert
varieties $\X_w$ in $\X(\dd)$ and their cohomology classes $[\X_w]$
are indexed by signed permutations $w$ in the Weyl group of $\Sp_{2n}$
whose descent positions are included among the $d_i$. Let $E'_j =
E/E_{p+1-j}$ for $1\leq j\leq p$; the Chern classes of the
tautological quotient bundles $E'_j$ are the special Schubert classes
in $\HH^*(\X(\dd),\Z)$. We then have
\begin{equation}
\label{introgiam}
[\X_w] = \sum_{\underline{\la}} 
e^w_{\underline{\la}}\,
\Ti_{\la^1}(E'_1)s_{\la^2}(E'_2-E'_1)\cdots s_{\la^p}(E'_p-E'_{p-1})
\end{equation}
summed over all sequences of partitions
$\underline{\la}=(\la^1,\ldots,\la^p)$ with $\la^1$ $k$-strict, where
$k=n-d_p$. Here $\Ti_{\la^1}$ and $s_{\la^i}$ denote theta and Schur
polynomials, respectively, and the coefficient $e^w_{\underline{\la}}$ is
a nonnegative integer which counts the number of $p$-tuples of leaves of
shape $\underline{\la}$ in the groves of the transition forest
associated to $\dd$ and $w$. 

Let $\IG(n-k,2n)$ denote the Grassmannian parametrizing isotropic
linear subspaces of $E$ of dimension $n-k$. The morphism which sends
$E_\bull$ to $E_p$ realizes $\X(\dd)$ as a fiber bundle over
$\IG(n-k,2n)$ with fiber equal to a type A partial flag variety.  The
mixed nature of the ingredients in formula (\ref{introgiam}) is in
harmony with this fact. For the type A and orthogonal partial flag
varieties, the Giambelli formulas and their underlying combinatorics
and geometry are entirely analogous to (\ref{introgiam}). The
definition of the transition forest differs slightly between the
types, and the role of the theta polynomial is played by a Schur or 
an eta polynomial, respectively.

Our proof of (\ref{introgiam}) and the corresponding formulas for
degeneracy loci is mainly combinatorial. We work with the Schubert
polynomials of Billey and Haiman \cite{BH}, and more generally with
their double versions introduced by Ikeda, Mihalcea, and Naruse
\cite{IMN}. These objects have most of the combinatorial properties of
the type A Schubert polynomials, which are crucial in order for us to
obtain explicit positive expressions such as (\ref{introgiam});
however their connection with the geometry is less clear. This latter
problem was recently addressed for the single Schubert polynomials in
\cite{T2, T3} and in \cite{IMN} for their double counterparts. Using
this theory, our geometric results follow readily by combining the
Giambelli formulas for isotropic Grassmannians from \cite{BKT2, BKT4}
with new {\em splitting theorems} for these Schubert polynomials,
which are analogues of \cite[Thm.\ 4 and Cor.\ 3]{BKTY} for the other
classical Lie types.

A first step towards splitting the Billey-Haiman Schubert polynomials
was formulated by Yong \cite{Yo}. However, his result -- and most
previous work on these polynomials -- does not suffice for the
aforementioned applications to geometry. The point is that the flag of
subspaces $E_\bull$ in (\ref{Eseq}) need not contain a Lagrangian
(i.e., a maximal isotropic) subspace. In terms of the Weyl group, this
is simply the fact that the first descent position $k$ of a signed
permutation $w$ need not be at {\em zero}. When $k=0$, the analysis is
easier because the cohomology ring of the Lagrangian Grassmannian
$\LG(n,2n)$ is more accessible, a result that goes back to Ehresmann
\cite{Eh}. The underlying reason for this is that the Schubert classes
on $\LG$ are indexed by {\em fully commutative elements} of the Weyl
group \cite{St2}. As a consequence, the Schubert calculus on $\LG$ is 
very similar to the classical one.

To complete the algebro-combinatorial picture for any $G/P$ space, we
need to extend the established theory to include the $k$-Grassmannian
elements which are not fully commutative, a program initiated in
\cite{BKT2, T4}. For our purposes here we introduce the {\em mixed
Stanley functions}, denoted by $J_w(X\,;Y)$ in type C.  The function
$J_w$ is defined as a sum of products of type A and type C Stanley
symmetric functions in two distinct sets of variables. For each fixed
$m\geq 0$, it includes among its coefficients the number of reduced
words of the signed permutation $w$ such that the last $m$ letters in
the word are positive (the letter $0$ is used to denote the sign
change). If $w$ has no descent positions less than $k$, then $J_w$ --
suitably restricted -- is a nonnegative integer linear combination of
theta polynomials indexed by $k$-strict partitions $\la$. The case
when $k=0$ was studied earlier by several authors \cite{H, Kr, L1,
B}. It is the {\em mixed Stanley coefficients} that appear in this
expansion which enter into the splitting and degeneracy locus
formulas.

Finally, our combinatorial interpretation of the numbers
$e^w_{\underline{\la}}$ in (\ref{introgiam}) depends on the {\em
transition equations} of Lascoux and Sch\"utzenberger \cite{LS3} and
Billey \cite{B}. More precisely, we define in \S \ref{tes} and \S
\ref{ogps} analogues of the Lascoux-Sch\"utzenberger transition tree
\cite{LS3}, which are rooted at suitable elements in the Weyl groups
of the symplectic and orthogonal groups. We remark that the difference
between the stability property of type A Schubert polynomials and the
analogous property in the other classical Lie types is also reflected
in the construction of these trees -- namely, there is a certain
branching rule in \cite{LS3} which has no counterpart in types B, C,
and D. It would be interesting to have tableau formulas for the
$e^w_{\underline{\la}}$, similar to the ones in \cite{BF, BKTY, KMS}
for quiver coefficients.

This article is organized so that most of the exposition is in type C;
a final section explains the analogous picture in types B and D, and
we refer to \cite[\S 4,5]{BKTY} for the results in type A. We review
the Schur, theta, and Schubert polynomials, as well as the Stanley
symmetric functions we require in \S \ref{prelims}. The mixed Stanley
functions and their basic properties are studied in \S
\ref{transition}; this includes applications to enumerating reduced
words and combinatorial rules for the product of two theta
polynomials. In particular we give a short proof of Proposition
\ref{prodprop}, which includes as a special case the Pieri type
products studied by Pragacz and Ratajski \cite{PR1} on non-maximal
isotropic Grassmannians. Our splitting theorems for Schubert
polynomials are proved in \S \ref{splitC}, and the applications to
symplectic degeneracy loci and Giambelli formulas are deduced in \S
\ref{sdl} and \S \ref{flags}, respectively.  We conclude with the
Schubert splitting results for the orthogonal groups in \S \ref{ogps}.

I grateful to my collaborators Anders Buch, Andrew Kresch, and
Alexander Yong for their hard work on related papers, especially
\cite{BKTY} and \cite{BKT1, BKT2}. I thank Leonardo Mihalcea for
conversations about his joint paper \cite{IMN} on double Schubert
polynomials, which played an important role in this project.  I also
thank the referees for their comments which helped to improve the
exposition.

\section{Preliminaries}
\label{prelims}

\subsection{Schur and theta polynomials}

An {\em integer sequence} is a sequence of integers $\al=(\al_1,
\al_2,\ldots)$ only finitely many of which are non-zero. The largest
integer $\ell\geq 0$ such that $\al_\ell\neq 0$ is called the {\em
length} of $\al$, denoted $\ell(\al)$; we will identify an integer
sequence of length $\ell$ with the vector consisting of its first
$\ell$ terms, and set $|\al| = \sum \al_i$. An integer sequence $\la$
is a {\em partition} if $\la_i \geq \la_{i+1}\geq 0$ for all $i$. We
will represent partitions $\la$ by their Young diagram of boxes, let
$\la'$ denote the conjugate (or transpose) of $\la$, and write
$\mu\subset\la$ for the containment relation between two Young
diagrams.  Following Young \cite{Y}, given any integer sequence
$\alpha$ and natural numbers $i<j$, we define
\[
R_{ij}(\alpha) = (\alpha_1,\ldots,\alpha_i+1,\ldots,\alpha_j-1,
\ldots).
\] 
A raising operator $R$ is any monomial in these $R_{ij}$'s.  If
$(u_1,u_2,\ldots)$ is any ordered set of commuting independent
variables and $\al$ is an integer sequence, we let $u_{\al} =
\prod_iu_{\al_i}$, with the understanding that $u_0=1$ and $u_r = 0$
if $r<0$. For any raising operator $R$, let $R\,u_{\al} = u_{R\al}$.

Let $c=(c_1,c_2,\ldots)$ and $d=(d_1,d_2,\ldots)$ be two families
of commuting variables. Define elements $h_r$ for $r\in \Z$ 
by the identity of formal power series
\[
\sum_{r=-\infty}^{+\infty}h_rt^r = 
\left(\sum_{i=0}^\infty(-1)^ic_it^i\right)^{-1}
\left(\sum_{i=0}^\infty(-1)^id_it^i\right).
\]
Consider the raising operator expression
\[
R^{0}= \prod_{i<j}(1-R_{ij})
\]
and for any partition $\la$, define the {\em Schur
polynomial} $s_\la(c-d)$ by
\[
s_\la(c-d) = R^{0}\, h_\la = \det(h_{\la_i+j-i})_{i,j}.
\]
Let $Y=(y_1,y_2,\ldots)$ and $Z=(z_1,z_2,\ldots)$ be two infinite sets
of variables, and define the elementary symmetric functions $e_r(Y)$
by the generating function
\[
\prod_{i=1}^{\infty}(1+y_it)=\sum_{r=0}^{\infty}e_r(Y)t^r.
\]
The {\em supersymmetric Schur function} $s_\la(Y/Z)$ is obtained from
$s_\la(c-d)$ by setting $c_r = e_r(Y)$ and $d_r = e_r(Z)$ for all
$r\geq 1$. The usual Schur $S$-functions satisfy the identities
$s_\la(Y) = s_\la(Y/Z)\vert_{Z=0}$ and $s_\la(0/Z) =
s_\la(Y/Z)\vert_{Y=0} = (-1)^{|\la|}s_{\la'}(Z)$. In particular, for
each integer $r$, the function $s_r(Y)$ is the complete symmetric function
in the variables $Y$, also denoted $h_r(Y)$.

Fix an integer $k\geq 0$. A partition $\la$ is {\em $k$-strict} if all 
its parts $\la_i$ greater than $k$ are distinct; $\la$ is called 
{\em strict} if it is $0$-strict. Any such $\la$ determines
a raising operator expression $R^\la$ by the prescription
\[
R^\la = \prod_{i<j}(1-R_{ij})\prod_{{i<j}\atop{\la_i+\la_j > 2k+j-i}}
(1+R_{ij})^{-1}
\]
where the second product is over all pairs $i<j$ such that
$\la_i+\la_j > 2k + j-i$. 
Define elements $g_r$ for $r\in \Z$ by the identity 
\[
\sum_{r=-\infty}^{+\infty}g_rt^r = 
\left(\sum_{i=0}^\infty c_it^i\right)
\left(\sum_{i=0}^\infty d_it^i\right)^{-1}
\]
and the {\em theta polynomial $\Ti_\la(c-d)$} by
\begin{equation}
\label{Tidef}
\Theta_\la(c-d) = R^\la\, g_\la. 
\end{equation}
If $X=(x_1,x_2,\ldots)$ is another infinite set of variables, the
formal power series $\ti_r(X\,;Y)$ for $r\in \Z$ are defined by the
equation
\[
\prod_{i=1}^{\infty}\frac{1+x_it}{1-x_it} \prod_{j=1}^k
(1+y_jt)= \sum_{r=0}^{\infty}\ti_r(X\,;Y)t^r.
\]
Following \cite{BKT2}, we then set $\Ti_\la(X\,;Y) = R^\la\,
\ti_\la$. The $\Ti_\la$ for $\la$ $k$-strict form a $\Z$-basis for the
ring $\Gamma^{(k)} = \Z[\ti_1,\ti_2,\ldots]$. The ring
$\Gamma=\Gamma^{(0)}$ is the ring of Schur $Q$-functions (see
\cite[III.8]{M2} and \cite{Sch}), and in this case $\ti_r(X)$ and
$\Ti_\la(X)$ are denoted by $q_r(X)$ and $Q_\la(X)$, respectively.

\subsection{The hyperoctahedral group}
\label{hypgp}

Let $W_n$ denote the hyperoctahedral group of signed permutations on
the set $\{1,\ldots,n\}$. We will adopt the notation where a bar is
written over an entry with a negative sign.  The group $W_n$ is the
Weyl group for the root system $\text{B}_n$ or $\text{C}_n$, and is
generated by the simple transpositions $s_i=(i,i+1)$ for $1\leq i \leq
n-1$ and the sign change $s_0(1)=\ov{1}$. There is a natural embedding
$W_n\hookrightarrow W_{n+1}$ defined by adjoining the fixed point
$n+1$.  The symmetric group $S_n$ is the subgroup of $W_n$ generated
by the $s_i$ for $1\leq i \leq n-1$, and is the Weyl group for the
root system $\text{A}_{n-1}$. We let $S_\infty = \cup_nS_n$ and
$W_\infty=\cup_n W_n$.

A {\em reduced word} for $w\in W_n$ is a sequence $a_1\cdots a_r$ of
elements in $\{0,1,\ldots,n-1\}$ such that $w=s_{a_1}\cdots s_{a_r}$
and $r$ is minimal (so equal to the length $\ell(w)$ of $w$). Given
any $u_1,\ldots,u_p,w\in W_\infty$, we write $u_1\cdots u_p=w$ if
$\ell(u_1)+\cdots + \ell(u_p)=\ell(w)$ and the product of
$u_1,\ldots,u_p$ is equal to $w$. In this case we say that $u_1\cdots
u_p$ is a {\em reduced factorization} of $w$.  We say that $w$ has
a {\em descent} at position $r\geq 0$ if $w_r>w_{r+1}$, where by
definition $w_0=0$.  An element $w\in W_\infty$ is {\em compatible}
with the sequence $\fraka\, :\, a_1 < \cdots < a_p$ of nonnegative
integers if all descent positions of $w$ are contained in $\fraka$.
For such $w$, we say that a reduced factorization $u_1\cdots u_p=w$ is
{\em compatible with} $\fraka$ if $u_j(i)=i$ for all $j>1$ and $i \leq
a_{j-1}$.

We say that a signed permutation $w\in W_\infty$ is {\em increasing up
  to $k$} if it has no descents less than $k$.  This condition is
vacuous if $k=0$, and for positive $k$ it means that $0 < w_1 < w_2 <
\cdots < w_k$. An important special case is the {\em $k$-Grassmannian}
elements, which by definition satisfy $\ell(ws_i)=\ell(w)+1$ for all
$i\neq k$. There is a natural bijection between $k$-Grassmannian
elements of $W_\infty$ and $k$-strict partitions, obtained as
follows. If $w\in W_n$ is $k$-Grassmannian, there exist unique strict
partitions $u,\zeta,v$ of lengths $k$, $r$, and $n-k-r$, respectively,
so that
\[
w=(u_k,\ldots,u_1,
\ov{\zeta}_1,\ldots,\ov{\zeta}_r,v_{n-k-r},\ldots,v_1).
\]
Define $\mu_i$ for $1\leq i \leq k$ by
\[
\mu_i=u_i+i-k-1+\#\{j\ |\ \zeta_j > u_i\}.
\]
Then $w$ corresponds to the $k$-strict partition $\la$ such that the
lengths of the first $k$ columns of $\la$ are given
by $\mu_1,\ldots,\mu_k$, and the part of $\la$ in columns $k+1$ and
higher is given by $\zeta$. Conversely, for any $k$-strict $\la$,
the corresponding $k$-Grassmannian element is denoted by $w_\la$.

\subsection{Schubert polynomials and Stanley symmetric functions}
\label{sps}

Following \cite{FS} and \cite{FK1, FK2}, we will use the nilCoxeter
algebra $\cW_n$ of the hyperoctahedral group $W_n$ to define Schubert
polynomials and Stanley symmetric functions in types A and C,
respectively. $\cW_n$ is the free associative algebra with unity
generated by the elements $u_0,u_1,\ldots,u_{n-1}$ modulo the
relations
\[
\begin{array}{rclr}
u_i^2 & = & 0 & i\geq 0\ ; \\
u_iu_j & = & u_ju_i & |i-j|\geq 2\ ; \\
u_iu_{i+1}u_i & = & u_{i+1}u_iu_{i+1} & i>0\ ; \\
u_0u_1u_0u_1 & = & u_1u_0u_1u_0.
\end{array}
\]
For any $w\in W_n$, choose a reduced word $a_1\cdots a_\ell$ for $w$
and define $u_w = u_{a_1}\ldots u_{a_\ell}$. Since the last three
relations listed are the Coxeter relations for $W_n$, it is clear that
$u_w$ is well defined, and that the $u_w$ for $w\in W_n$ form a free
$\Z$-basis of $\cW_n$. We denote the coefficient of $u_w\in \cW_n$ in
the expansion of the element $f\in \cW_n$ by $\langle f,w\rangle$;
thus $f = \sum_{w\in W_n}\langle f,w\rangle\,u_w$ for all $f\in \cW_n$.

Let $t$ be an indeterminate and define
\begin{gather*}
A_i(t) = (1+t u_{n-1})(1+t u_{n-2})\cdots 
(1+t u_i) \ ; \\
\tilde{A}_i(t) = (1-t u_i)(1-t u_{i+1})\cdots (1-t u_{n-1}) \ ; \\
C(t) = (1+t u_{n-1})\cdots(1+t u_1)
(1+2t u_0)(1+t u_1)\cdots (1+t u_{n-1}).
\end{gather*}
According to \cite[Lemma 2.1]{FS} and \cite[Prop.\ 4.2]{FK2}, for all
commuting variables $s$, $t$ and indices $i$, the relations
$A_i(s)A_i(t) = A_i(t)A_i(s)$ and $C(s)C(t) = C(t)C(s)$ hold. If
$C(X)=C(x_1)C(x_2)\cdots$ and $A(Y)=A_1(y_1)A_1(y_2)\cdots$, we deduce
that the functions $F_w(X)$ and $G_\om(Y)$ defined for $w\in W_n$ and
$\om\in S_n$ by
\[
F_w(X) = \langle C(X), w \rangle \ \ \ \mathrm{and} \ \ \ 
G_\om(Y) = \langle A(Y), \om\rangle 
\]
are symmetric functions in $X$ and $Y$, respectively.  The $G_\om$ and
$F_w$ are the type A and type C Stanley symmetric functions,
introduced in \cite{Sta} and \cite{BH, FK2, L2}. We have that 
$F_w=F_{w^{-1}}$.

When $G_\om$ is expanded in the basis of Schur functions,
one obtains a formula
\begin{equation}
\label{Geq}
G_\om(Y) = \sum_{\la\, :\, |\la| = \ell(\om)}c^\om_\la s_\la(Y)
\end{equation}
for some nonnegative integers $c^\om_\la$ (see \cite{LS2}, \cite{EG}).
According to \cite{LS2} (see also \cite[(7.22)]{M1}), we have
$c_\la^\om = c^{\om^{-1}}_{\la'}$.  Lascoux and Sch\"utzenberger
\cite{LS3} gave one of the first combinatorial interpretations for the
coefficients $c^\om_\la$, as the number of leaves of shape $\la$ in
the {\em transition tree} $T(\om)$ they associated to $\om$. Equation
(\ref{Geq}) may be used to define the double Stanley symmetric
functions $G_\om(Y/Z)$.

For any $\om \in S_n$, the Schubert polynomial $\AS_\om$ of Lascoux
and Sch\"utzenberger is given by 
\begin{equation}
\label{ASdef}
\AS_\om(Y) = \left\langle 
A_1(y_1)A_2(y_2)\cdots A_{n-1}(y_{n-1}), \om\right\rangle.
\end{equation}
The definition (\ref{ASdef}) is equivalent to the
one in \cite{LS1}, as is shown in \cite{FS}.
Now define
\begin{equation}
\label{dbleC}
\CS_w(X\,;Y,Z) = \left\langle 
\tilde{A}_{n-1}(z_{n-1})\cdots \tilde{A}_1(z_1)C(X) A_1(y_1)\cdots 
A_{n-1}(y_{n-1}), w\right\rangle.
\end{equation}
If $\CS_w(X\,;Y):=\CS_w(X\,;Y,0)$,
then (\ref{dbleC}) is equivalent to the equation
\begin{equation}
\label{dbleC2}
\CS_w(X\,;Y,Z) = \sum_{uv=w}\AS_{u^{-1}}(-Z)\CS_v(X\,;Y)
\end{equation}
summed over all reduced factorizations $uv=w$ with $u\in S_n$.  The
polynomials $\CS_w$ in (\ref{dbleC}) were introduced by Ikeda,
Mihalcea, and Naruse \cite{IMN}; they are double versions of the type
C Billey-Haiman Schubert polynomials \cite{BH}. Their definition
differs from (\ref{dbleC}), but the equivalence of the two follows by
combining (\ref{dbleC2}) with \cite[\S 7]{FK2} and \cite[Cor.\
8.10]{IMN}.  One checks that $\AS_\om$ and $\CS_w$ are stable under
the natural inclusion of $W_n$ in $W_{n+1}$, and hence well defined
for $\om\in S_\infty$ and $w\in W_\infty$, respectively. The
$\AS_\om(Y)$ for $\om \in S_\infty$ form a $\Z$-basis of the
polynomial ring $\Z[Y]$, and the $\CS_w(X\,;Y)$ for $w\in W_\infty$
form a $\Z$-basis of $\Gamma[Y]$.

If $\om\in S_\infty$ is a Grassmannian permutation with a unique
descent at $r$, then $\AS_\om(Y)$ is a Schur polynomial in
$(y_1,\ldots,y_r)$. In \cite[\S 6]{BKT2}, we obtained the analogue of
this result for the $\CS_w(X\,;Y)$: if $w = w_\la\in W_\infty$ is the
$k$-Grassmannian permutation associated to the $k$-strict partition
$\la$, then 
\begin{equation}
\label{CtoT}
\CS_{w_\la}(X\,;Y) = \Ti_\la(X\,;Y).
\end{equation}

\subsection{Splitting type A Schubert polynomials}

If $r\leq s$ are any two integers, and $P(X,Y,Z)$ is any polynomial or
formal power series in the variables $x_i$, $y_j$, and $z_j$, we let
$P[r,s]$ denote the power series obtained from $P(X,Y,Z)$ by setting
$x_i=0$ for all $i$ if $0\notin [r,s]$, $y_j=0$ if $j\notin [r,s]$,
and $z_j = 0$ if $-j\notin [r,s]$. If $0\in [r,s]$ we set
$P^{(r,s)}=P[r,s]$ and $P^{(s)}=P[0,s]$.

If $w\in W_n$ and $v\in S_m$, we define $w\times v\in W_{m+n}$ to be
the signed permutation $(w_1,\dots,w_n,v_1+n,\ldots, v_m+n)$; in
particular, $1_n\times v$ is used to denote the permutation
$(1,\dots,n,v_1+n,\ldots, v_m+n)$.  For $\om\in S_\infty$ and $1\leq
r\leq s$, equation (\ref{ASdef}) immediately gives
\begin{equation}
\label{Astab}
\AS_\om[r,s] = \begin{cases} \AS_v(y_r,\ldots,y_s) & \text{if 
$\om=1_{r-1}\times v$}, \\
0 & \text{otherwise}.
\end{cases}
\end{equation}
Given any sequence $\fraka \, :\, a_1<\cdots <a_p$ of positive
integers, we furthermore obtain
\[
\AS_\om(Y) = \sum_{u_1\cdots u_p = \om}\AS_{u_1}[1,a_1]
\AS_{u_2}[a_1+1,a_2]\cdots\AS_{u_p}[a_{p-1}+1,a_p]
\]
summed over all reduced factorizations $u_1\cdots u_p = \om$.

If $\om$ is increasing up to $r$, then $\AS_\om$ is symmetric in
$y_1,\ldots , y_r$ and we have
\begin{equation}
\label{AG}
\AS_\om^{(r)} = G_\om^{(r)} = \sum_{\la\, :\, |\la| = 
\ell(\om)}c^\om_\la s_\la(y_1,\ldots,y_r)
\end{equation} 
with the coefficients $c^\om_\la$ as in (\ref{Geq}). Suppose now that
$\om$ is compatible with the sequence $\fraka$, and set $Y_i =
\{y_{a_{i-1}+1},\ldots,y_{a_i}\}$ for each $i$.  From the previous
considerations, we deduce that $\AS_\om$ satisfies the formula
\begin{equation}
\label{SGsplitting}
\AS_\om(Y) = \sum_{u_1\cdots u_p = \om} G_{u_1}(Y_1)\cdots G_{u_p}(Y_p)
\end{equation}
summed over all reduced factorizations $u_1\cdots u_p = \om$
compatible with $\fraka$. Using (\ref{Geq}) to refine (\ref{SGsplitting})
further gives
\begin{equation}
\label{ASsplitting}
\AS_\om(Y) = \sum_{\underline{\la}} c^\om_{\underline{\la}}
s_{\la^1}(Y_1)\cdots s_{\la^p}(Y_p)
\end{equation}
summed over all sequences of partitions 
$\underline{\la}=(\la^1,\ldots,\la^p)$, where 
\[
c^\om_{\underline{\la}} = \sum_{u_1\cdots u_p = \om}
c_{\la^1}^{u_1}\cdots c_{\la^p}^{u_p}, 
\]
summed over all reduced factorizations $u_1\cdots u_p = \om$
compatible with $\fraka$. More general versions of
(\ref{SGsplitting}), (\ref{ASsplitting}) for universal Schubert
polynomials \cite{Fu4} and quiver polynomials \cite{BF} are
established in \cite{BKTY, KMS}.  Equation (\ref{ASsplitting}) was
used in \cite[\S 5]{BKTY} to obtain Giambelli formulas for type A
partial flag varieties.

\section{Transition for mixed Stanley functions}
\label{transition}

\subsection{Mixed Stanley functions}

\begin{defn}
\label{Stdef}
Given $w\in W_n$, the (right) {\em type C mixed Stanley function}
$J_w(X\,;Y)$ is defined by the equation
\[
J_w(X\,;Y) = 
\langle C(X)A(Y),w\rangle = 
\sum_{uv=w}F_u(X)G_v(Y)
\]
summed over all reduced factorizations $uv=w$ with $v\in S_n$.
\end{defn}

Definition \ref{Stdef} can be easily restated in terms of reduced
decompositions and admissible sequences, along the lines of \cite[Eq.\
(3.2)]{BH}. One has a dual notion of a left mixed Stanley function
$J'_w(X\,;Y) = \sum_{uv=w}G_{u^{-1}}(Y)F_v(X)$, summed over all reduced
factorizations $uv=w$ with $u\in S_n$. This is equivalent to the right
version since clearly $J'_w(X\,;Y) = J_{w^{-1}}(X\,;Y)$. Furthermore,
observe that $J_w(X\,;Y)$ is well defined for $w\in W_\infty$.

\medskip

Recall that $J^{(k)}_w = J_w[0,k]$. 

\begin{lemma}
\label{CSk}
If $w\in W_\infty$ is increasing up to $k$, then $\CS_w^{(k)}=
J_w^{(k)}$. In particular, if $w=w_\la$ is $k$-Grassmannian, then
\begin{equation}
\label{HeqT}
J_{w_\la}^{(k)}(X\,;Y) = \Ti_\la(X\,;Y).
\end{equation}
\end{lemma}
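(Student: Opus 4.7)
The plan is to compare the expansions of $\CS_w(X\,;Y)$ and $J_w(X\,;Y)$ as sums over reduced factorizations $uv=w$, and reduce the identity $\CS_w^{(k)}=J_w^{(k)}$ to the type A splitting property (\ref{AG}).

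First I would exploit the multiplicative structure of the nilCoxeter algebra. The element $A_1(y_1)A_2(y_2)\cdots A_{n-1}(y_{n-1})$ appearing in (\ref{ASdef}) involves only $u_1,\ldots,u_{n-1}$, so it expands as $\sum_{v\in S_\infty}\AS_v(Y)\,u_v$. Combining this with $C(X)=\sum_{u\in W_\infty}F_u(X)\,u_u$ and the rule that $u_u u_v=u_{uv}$ exactly when $uv$ is a reduced factorization (and vanishes otherwise), one obtains
\[
\CS_w(X\,;Y) = \sum_{\substack{uv = w \\ v \in S_\infty}} F_u(X)\,\AS_v(Y).
\]
A parallel expansion of $C(X)A(Y)$, whose right factor equals $\sum_{v\in S_\infty}G_v(Y)\,u_v$, yields
\[
J_w(X\,;Y) = \sum_{\substack{uv = w \\ v \in S_\infty}} F_u(X)\,G_v(Y).
\]

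The combinatorial heart of the argument is the observation that in any reduced factorization $uv=w$, a descent of $v$ at position $i$ forces a descent of $w$ at $i$. Indeed, the inequality
\[
\ell(ws_i)\;\leq\;\ell(u)+\ell(vs_i)\;=\;\ell(u)+\ell(v)-1\;=\;\ell(w)-1
\]
shows that $\ell(ws_i)<\ell(w)$. Consequently, if $w$ is increasing up to $k$, so is every such $v$: descents at positions $1,\ldots,k-1$ are excluded by the inequality above, and a descent at $0$ is ruled out by $v\in S_\infty$. The identity (\ref{AG}) then gives $\AS_v^{(k)}=G_v^{(k)}$ for every factor $v$ that contributes, and specializing $y_{k+1},y_{k+2},\ldots$ to zero in the two displayed expansions identifies $\CS_w^{(k)}$ with $J_w^{(k)}$.

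For the second assertion, when $w=w_\la$ is $k$-Grassmannian, (\ref{CtoT}) gives $\CS_{w_\la}(X\,;Y)=\Ti_\la(X\,;Y)$; since $\Ti_\la(X\,;Y)$ involves only $y_1,\ldots,y_k$ by the generating function for $\ti_r(X\,;Y)$, one has $\CS_{w_\la}^{(k)}=\CS_{w_\la}=\Ti_\la$, and combining with the first part yields $J_{w_\la}^{(k)}=\Ti_\la$. I do not foresee a substantive obstacle: the proof rests on routine nilCoxeter bookkeeping, the one-line length estimate above, and the type A identity (\ref{AG}) which is already in hand.
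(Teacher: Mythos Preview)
Your proposal is correct and follows essentially the same route as the paper: expand $\CS_w$ and $J_w$ as sums $\sum_{uv=w,\,v\in S_\infty}F_u\cdot(\AS_v\text{ or }G_v)$, observe via the length inequality that each right factor $v$ inherits ``increasing up to $k$'' from $w$, and then invoke (\ref{AG}) to identify $\AS_v^{(k)}=G_v^{(k)}$ termwise. The paper's own proof is terser but identical in substance, and your derivation of (\ref{HeqT}) from (\ref{CtoT}) together with the fact that $\Ti_\la$ depends only on $y_1,\ldots,y_k$ matches the paper's one-line deduction.
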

\begin{proof}
Observe that if $w=uv$ is a reduced factorization, then
$\ell(vs_i)=\ell(v)+1$ for all $i<k$, i.e., $v$ is also
increasing up to $k$. It follows that
\[
\CS_w^{(k)} = \sum_{uv=w, \, v\in S_{\infty}} F_u(X)\AS_v^{(k)} =
\sum_{uv=w, \, v\in S_{\infty}} F_u(X)G_v^{(k)} = J_w^{(k)},
\]
as claimed. Equation (\ref{HeqT}) follows from this and (\ref{CtoT}).
\end{proof}

\begin{example}
If $\la$ is a $k$-strict partition, then
\begin{equation}
\label{Jex}
J_{w_\la}(X\,;Y) = \sum_{\mu\subset\la} F_{\la/\mu}(X)s_{\mu'}(Y)
\end{equation}
summed over all $k$-strict partitions $\mu\subset\la$. The function
$F_{\la/\mu}(X)$ in (\ref{Jex}) is the skew $F$-function from
\cite[\S 5]{T4}, which, when non-zero, is equal to $F_{w_\la
w_\mu^{-1}}(X)$.
\end{example}

\subsection{Transition equations}
\label{tes}

For positive integers $i<j$ we define reflections $t_{ij}\in S_\infty$
and $\ov{t}_{ij},\ov{t}_{ii} \in W_\infty$ by their right actions
\begin{align*}
(\ldots,w_i,\ldots,w_j,\ldots)\,t_{ij} &= 
(\ldots,w_j,\ldots,w_i,\ldots), \\
(\ldots,w_i,\ldots,w_j,\ldots)\,\ov{t}_{ij} &= 
(\ldots,\ov{w}_j,\ldots,\ov{w}_i,\ldots), \ \ \mathrm{and} \\
(\ldots,w_i,\ldots)\,\ov{t}_{ii} &= 
(\ldots,\ov{w}_i,\ldots).
\end{align*} 
We let $\ov{t}_{ji} = \ov{t}_{ij}$. According to \cite[Thms.\ 4,
5]{B}, the type C Schubert polynomials $\CS_w=\CS_w(X\,;Y)$
satisfy the recursion formula
\begin{equation}
\label{recurse}
\CS_w = y_r\CS_{wt_{rs}} + \sum_{{1 \leq i < r} \atop {\ell(wt_{rs}t_{ir}) = 
\ell(w)}} \CS_{wt_{rs}t_{ir}} + 
\sum_{{i\geq 1} \atop {\ell(wt_{rs}\ov{t}_{ir}) = 
\ell(w)}} \CS_{wt_{rs}\ov{t}_{ir}}, 
\end{equation}
where $r$ is the last {\em positive} descent of $w$ and $s$ is maximal
such that $w_s < w_r$. If $w$ is increasing up to $k$ and the last
descent $r$ of $w$ satisfies $r>k$, we deduce from (\ref{recurse}),
Lemma \ref{CSk}, and Lemma \ref{Tlemma} below that
\begin{equation}
\label{rec2}
J_w^{(k)} = \sum_{{1 \leq i < r} \atop {\ell(wt_{rs}t_{ir}) = 
\ell(w)}} J_{wt_{rs}t_{ir}}^{(k)} + 
\sum_{{i \geq 1} \atop {\ell(wt_{rs}\ov{t}_{ir}) = 
\ell(w)}} J_{wt_{rs}\ov{t}_{ir}}^{(k)}.
\end{equation}
The definitions of $r$ and $s$ imply that we have
$\ell(wt_{rs})=\ell(w)-1$ in (\ref{recurse}) and (\ref{rec2}). The
next result therefore characterizes the reflections $t=t_{ir}$ and
$t=\ov{t}_{ir}$ such that $\ell(wt_{rs}t)=\ell(w)$.

\begin{lemma}[\cite{Mo, B}]  
\label{abc}
Let $w$ be an element of $W_\infty$. 

\smallskip
\noin
{\em (a)} If $i<j$, then $\ell(wt_{ij})= \ell(w)+1$ if and only if 
$w_i < w_j$, and there is no $p$ with $i<p<j$ and $w_i<w_p<w_j$.

\smallskip
\noin {\em (b)} If $i\leq j$, then $\ell(w\ov{t}_{ij})= \ell(w)+1$ if
and only if {\em (i)} $-w_i<w_j$ and $-w_j<w_i$, {\em (ii)} in case
$i<j$, either $w_i<0$ or $w_j<0$, and {\em (iii)} there is no $p<i$
such that $-w_j<w_p<w_i$, and no $p<j$ such that $-w_i < w_p < w_j$.

\smallskip
\noin {\em (c)} If $i<j$ and $\ell(w\ov{t}_{ij})= \ell(w)+1$, then
each positive descent of $w\ov{t}_{ij}$ is also a positive descent of
$w$.
\end{lemma}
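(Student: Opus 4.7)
The plan is to deduce all three parts from the combinatorial length formula for $W_n$, namely
\[
\ell(w) = \mathrm{inv}(w) + \mathrm{nsp}(w),
\]
where $\mathrm{inv}(w) = \#\{i<j : w_i > w_j\}$ counts signed inversions and $\mathrm{nsp}(w) = \#\{i \leq j : w_i + w_j < 0\}$ counts negative-sum pairs (with the diagonal included). For \textbf{(a)}, the reflection $t_{ij}$ permutes the values $w_i, w_j$ without altering signs. For each $a \notin \{i,j\}$ the pair of contributions to $\mathrm{nsp}$ from $\{a,i\}$ and $\{a,j\}$ is $[w_a + w_i < 0] + [w_a + w_j < 0]$ both before and after $t_{ij}$, the diagonal terms at $(i,i)$ and $(j,j)$ swap, and $(i,j)$ is unchanged. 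Hence $\mathrm{nsp}$ is preserved, so the length change is entirely due to $\mathrm{inv}$, and the standard $S_\infty$ count applies: the net change is $\pm(1 + 2 N)$ with $N = \#\{p \in (i,j) : w_p \text{ is strictly between } w_i \text{ and } w_j\}$, with sign positive exactly when $w_i < w_j$, which gives the claim.

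For \textbf{(b)}, both $\mathrm{inv}$ and $\mathrm{nsp}$ change. The Coxeter criterion $\ell(wt_\beta) > \ell(w) \Leftrightarrow w(\beta) > 0$, applied to $\beta = e_i + e_j$ (for $i < j$) or $\beta = 2e_i$ (for $i = j$) in the root system of type $\mathrm{C}_n$, after unwinding the four sign cases for $(w_i, w_j)$ using $e_{-k} := -e_k$, shows that $\ell(w\ov t_{ij}) > \ell(w)$ is equivalent to condition (i). To refine ``increases'' to ``increases by exactly one'', I would compute $\ell(w\ov t_{ij}) - \ell(w)$ explicitly as $1 + 2N'$, where $N'$ counts positions $p$ violating one of the inequalities in clause (iii), plus a diagonal/sign correction that vanishes precisely when clause (ii) holds. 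The case $i = j$ is the degenerate sub-case: here $\ell(w\ov t_{ii}) - \ell(w) = 1 + 2\,\#\{p < i : -w_i < w_p < w_i\}$ when $w_i > 0$, giving the corresponding criterion for $i = j$.

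For \textbf{(c)}, set $w' = w \ov t_{ij}$ with $i < j$ and $\ell(w') = \ell(w)+1$. Since $w'_i = -w_j$, $w'_j = -w_i$, and $w'_k = w_k$ otherwise, any positive descent of $w'$ at a position $r$ with $\{r, r+1\} \cap \{i,j\} = \emptyset$ is immediately a positive descent of $w$. The four boundary cases $r \in \{i-1, i, j-1, j\}$ are handled by combining the descent inequality for $w'$ with clauses (i) and (iii). At $r = i-1$, the inequality $w_{i-1} > -w_j$ together with the absence of $p = i-1 < i$ satisfying $-w_j < w_{i-1} < w_i$ (clause (iii)) forces $w_{i-1} > w_i$; at $r = j-1 > i$ an analogous use of the second half of (iii) gives $w_{j-1} > w_j$; at $r = i$ with $j > i+1$, clause (i) yields $w_{i+1} < -w_j < w_i$; at $r = j$ it yields $w_{j+1} < -w_i < w_j$; and in the adjacent subcase $r = i$ with $j = i+1$, the descent $w'_i > w'_j$ reads $-w_j > -w_i$, i.e.\ $w_i > w_{i+1}$.

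The main obstacle is the bookkeeping in part (b): one must simultaneously track how both $\mathrm{inv}$ and $\mathrm{nsp}$ respond to a reflection that both permutes and negates, and carefully separate the contributions of intermediate positions (controlled by (iii)) from those of the diagonal and sign terms (controlled by (ii)). Once the explicit formula for $\ell(w\ov t_{ij}) - \ell(w)$ is in hand, parts (b) and (c) follow as indicated.
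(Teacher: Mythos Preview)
The paper does not supply a proof of this lemma; it is quoted from the literature (Monk and Billey) with no argument given. So there is no ``paper's own proof'' to compare against, and your proposal should be judged on its own terms.

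Your arguments for (a) and (c) are correct and complete. In (a), the observation that $t_{ij}$ leaves $\mathrm{nsp}$ invariant reduces everything to the $S_\infty$ inversion count, and your formula $\pm(1+2N)$ is right. In (c), the case analysis on $r\in\{i-1,i,j-1,j\}$ is carefully done: at $r=i-1$ and $r=j-1$ you correctly invoke the two halves of (iii) with $p=r$, and at $r=i$ (for $j>i+1$) and $r=j$ you correctly use (i) to sandwich $w_{r+1}$. The adjacent case $j=i+1$ is handled separately. This is a clean proof of (c).

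Part (b), however, is only a sketch. You correctly identify that the Coxeter criterion $\ell(wt_\beta)>\ell(w)\Leftrightarrow w\beta>0$ with $\beta=e_i+e_j$ gives condition (i), since $w(e_i+e_j)>0$ if and only if $w_i+w_j>0$. But the heart of the lemma is the refinement to $\ell(w\ov t_{ij})-\ell(w)=1$, and here you write ``I would compute $\ell(w\ov t_{ij})-\ell(w)$ explicitly as $1+2N'$ \ldots\ plus a diagonal/sign correction that vanishes precisely when clause (ii) holds'' without carrying this out. That computation is exactly what needs to be done: you must track, for each $p\notin\{i,j\}$, how the four relevant pair-contributions to $\mathrm{inv}$ and $\mathrm{nsp}$ change under $w_i\mapsto -w_j$, $w_j\mapsto -w_i$, and separately handle the diagonal terms $(i,i),(j,j)$ and the pair $(i,j)$. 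For instance, when $i<j$ and $w_i,w_j>0$ (so (ii) fails), the diagonal $\mathrm{nsp}$ terms jump by $+2$ and the $(i,j)$ term in $\mathrm{nsp}$ jumps by $+1$, which already forces the length change to exceed $1$; this is the mechanism behind (ii), but you have not written it down. Until that bookkeeping is done, (b) remains unproved.
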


For any $w\in W_\infty$ which is increasing up to $k$, we construct a
rooted tree $T^k(w)$ with nodes given by elements of $W_\infty$ and
root $w$ as follows. Let $r$ be the last descent of $w$. If $w=1$ or
$r=k$, then set $T^k(w)=\{w\}$. Otherwise, let $s = \max(i>r\ |\ w_i <
w_r)$ and $\Phi(w)= \Phi_1(w)\cup \Phi_2(w)$, where
\begin{gather*}
\Phi_1(w)= \{wt_{rs}t_{ir}\ |\ 1\leq i < r \ \ \mathrm{and} \ \ 
\ell(wt_{rs}t_{ir}) = \ell(w) \}, \\
\Phi_2(w)= 
\{wt_{rs}\ov{t}_{ir}\ |\ i\geq 1 \ \ \mathrm{and} \ \
\ell(wt_{rs}\ov{t}_{ir}) = \ell(w) \}.
\end{gather*}
To recursively define $T^k(w)$, we join $w$ by an edge to each $v\in
\Phi(w)$, and attach to each $v\in \Phi(w)$ its tree $T^k(v)$.

We call $T^k(w)$ the {\em $k$-transition tree} of $w$. Clearly 
$T^k(w)$ is a subtree of $T^{k'}(w)$ for any $k'\leq k$. The 
$0$-transition tree of $w$ was studied in \cite{B, BL}. 

\begin{lemma}
\label{Tlemma}
The $k$-transition tree $T^k(w)$ is finite. All the nodes of $T^k(w)$
are increasing up to $k$, and the leaves of $T^k(w)$ are $k$-Grassmannian
elements.
\end{lemma}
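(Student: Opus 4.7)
The plan is to verify the three claims in sequence: every node of $T^k(w)$ is increasing up to $k$, the tree is finite, and every leaf is $k$-Grassmannian.

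First, I would induct on the depth of a node $v$ to show it is increasing up to $k$. The root satisfies this by assumption. Let $v$ be an internal node with last descent $r > k$, and let $v' \in \Phi(v)$ correspond to the index $i \geq 1$. If $i > k$ then the entries of $v$ at positions $1, \ldots, k$ are not modified, and we are done. If $i \leq k$ then only position $i$ is altered, to $v_s$ (Case 1) or $-v_s$ (Case 2); the length criteria of Lemma \ref{abc}(a), (b) are precisely what is needed to guarantee that the new entry is positive and still fits strictly between $v_{i-1}$ and $v_{i+1}$ (in Case 2 condition (ii) forces $v_s < 0$ since $v_i > 0$, and condition (iii) rules out any $v_{i-1}$ between $-v_s$ and $v_i$). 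Granting this, part (iii) is immediate: a leaf is by construction either the identity or has last descent at position $k$, and combined with part (i) this forces $k$ to be the unique possible descent, which is the definition of a $k$-Grassmannian element.

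The main obstacle is finiteness. I propose the statistic $\sigma(v) = (r(v), v_{r(v)})$ with lexicographic order, where $r(v)$ is the last descent of $v$ (set $\sigma(1) = (0,0)$). Every transition only permutes or negates entries, so the multiset $\{|v_j|\}$ is invariant on $T^k(w)$ and hence $\sigma$ takes values in a finite set. Each node has finite branching, so it suffices to show that $\sigma$ strictly decreases along every edge $v \to v'$.

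To establish this, I first observe that $vt_{rs}$ has no descent at any position $> r$: the tail $v_{r+1} < v_{r+2} < \cdots < v_s$ is strictly increasing since $r$ is the last descent of $v$, and $v_j > v_r$ for $j > s$ by maximality of $s$, so inserting $v_r$ at position $s$ creates no descent above $r$. The subsequent reflection by $t_{ir}$ or $\ov t_{ir}$ only touches positions $\leq r$ when $i \leq r$, and when $i > r$ the same conclusion follows from Lemma \ref{abc}(c), which transfers positive descents of $v'$ back to positive descents of $vt_{rs}$. Hence $r(v') \leq r$. When $r(v') = r$, a short case analysis using Lemma \ref{abc}(a), (b) shows that the new entry at position $r$---which is $v_i$, $-v_i$, $-v_s$, or $-v_r$ depending on whether $v'$ comes from $t_{ir}$, $\ov t_{ir}$ with $i < r$, $\ov t_{rr}$, or $\ov t_{ir}$ with $i \geq r$---is in each case strictly less than $v_r$ (for example, condition (i) of Lemma \ref{abc}(b) directly yields $-v_i < v_s < v_r$, and for $\ov t_{rr}$ it forces $v_s > 0$ so $-v_s < 0 < v_r$). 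This completes the strict decrease of $\sigma$ and hence the finiteness of $T^k(w)$.
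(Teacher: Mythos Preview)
Your argument for ``increasing up to $k$'' is essentially the same as the paper's, and the $\sigma$-statistic idea for finiteness is a reasonable direct approach (the paper instead simply cites Billey's result that $T^0(w)$ is finite and observes $T^k(w)\subset T^0(w)$). However, there are two genuine gaps.

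\textbf{Leaves are $k$-Grassmannian.} Your claim ``a leaf is by construction either the identity or has last descent at position $k$'' presupposes that $\Phi(v)\neq\emptyset$ whenever the last descent $r$ of $v$ exceeds $k$. If $\Phi(v)$ were empty for such a $v$, then $v$ would be a leaf of $T^k(w)$ without being $k$-Grassmannian. The paper proves non-emptiness explicitly: when $\Phi_1(v)=\emptyset$ one exhibits an element of $\Phi_2(v)$ (namely $vt_{rs}\ov t_{rr}$ if $v_s>0$, or $vt_{rs}\ov t_{ir}$ for the minimal $i$ with $v_i>-v_s$ if $v_s<0$). You need this step.

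\textbf{Finiteness.} The lexicographic order on $\Z_{\geq 0}\times\Z$ is \emph{not} well-founded: the chain $(r_0,0),(r_0,-1),(r_0,-2),\ldots$ is strictly decreasing and infinite. Your justification that ``$\sigma$ takes values in a finite set'' because ``the multiset $\{|v_j|\}$ is invariant'' does not work: for any signed permutation that multiset is $\{1,2,3,\ldots\}$, so the invariance is tautological and gives no bound on $v_{r(v)}$. The difficulty is that the tree does \emph{not} stay inside $W_n$; one can have $i=n(v)+1$ in the $\ov t_{ir}$ branch with $i>r$, and then $v'_{r}=-(n(v)+1)$. What saves the argument is that in exactly this case $v'_r$ becomes the minimum possible value, forcing $r(v')<r$; hence $n(v)$ can increase only when $r$ strictly drops, so all nodes lie in $W_{n+r(w)}$ and $\sigma$ is bounded. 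This is the content the paper imports from \cite{B}, and it (or an equivalent bound) must be supplied.

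A minor correction: for $\ov t_{ir}$ with $i>r$ the new $r$-th entry is $-u_i$, which equals $-v_i$ when $i\neq s$ and $-v_r$ only when $i=s$; your conclusion $v'_r<v_r$ is still correct since condition (i) of Lemma~\ref{abc}(b) gives $-u_i<u_r=v_s<v_r$.
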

\begin{proof}
If $w$ is $k$-Grassmannian then clearly $T^k(w)=\{w\}$. We will show
that if $w$ is increasing up to $k$ and not $k$-Grassmannian, then
$\Phi(w)\neq \emptyset$ and all elements of $\Phi(w)$ are increasing
up to $k$.  Let $r>k$ be the last descent of $w$, $s$ be maximal such
that $w_s < w_r$, and $v=wt_{rs}$.  If $\Phi_1(w)=\emptyset$, then
according Lemma \ref{abc}(a), we must have $w_i > w_s$ for all
$i<r$. If $w_s>0$ then $v\ov{t}_{rr}\in \Phi_2(w)$. If $w_s<0$, let
$i>0$ be minimal such that $w_i>-w_s$.  Then one can easily check
using Lemma \ref{abc}(b) that $v\ov{t}_{ir}\in \Phi_2(w)$. This proves
that $\Phi(w)$ is nonempty.

We next show that all elements of $\Phi(w)$ are increasing up to $k$.
For this, we may assume that $k>0$. If there exists an $i\leq k$ such
that $\ell(vt_{ir}) = \ell(w)$, then Lemma \ref{abc}(a) implies that
$w_i < w_s$ and there is no $j$ with $i < j < r$ and $w_i < w_j <
w_s$. It follows that $vt_{ir}$ is increasing up to $k$.  If
$\ell(v\ov{t}_{ir}) = \ell(w)$ for some $i\leq k$, then by Lemma
\ref{abc}(b)(ii) we must have $w_s <0$, since $w$ is increasing up to
$k$ and hence $v_i=w_i>0$. Moreover, by Lemma \ref{abc}(c), the
descent set of $v\ov{t}_{ir}$ is contained in the descent set of
$v$. We deduce that $v\ov{t}_{ir}$ is also increasing up to $k$.

Finally, it is shown in the proof of \cite[Thm.\ 4]{B} that the
recursion defining $T^0(w)$ terminates after a finite number of steps;
hence $T^k(w)$ is a finite tree. Moreover, we deduce from loc.\ cit.\
that if $w\in W_n$, then all of the nodes of $T^k(w)$ lie in
$W_{n+r}$.
\end{proof}

\begin{defn}
If $w\in W_\infty$ is increasing up to $k$ and $v$ is a leaf of
$T^k(w)$, the {\em shape} of $v$ is the $k$-strict partition $\la$
associated to $v$.  For any $k$-strict partition $\la$, the {\em mixed
Stanley coefficient} $e^w_{\la}$ is equal to the number of leaves of
$T^k(w)$ of shape $\la$.
\end{defn}

The next result is a type C analogue of equation (\ref{AG}).

\begin{thm}
\label{mainthm}
If $w\in W_\infty$ is increasing up to $k$, then we have an expansion
\begin{equation}
\label{CStan}
\CS_w^{(k)} = J_w^{(k)} = \sum_{\la\, :\, |\la| = \ell(w)}
e^w_{\la}\,\Ti_{\la}
\end{equation}
where the sum is over $k$-strict partitions $\la$.
\end{thm}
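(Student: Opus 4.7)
The plan is to combine Lemma \ref{CSk}, the transition equation (\ref{rec2}), and the finiteness of the $k$-transition tree (Lemma \ref{Tlemma}) in a straightforward induction. The identity $\CS_w^{(k)} = J_w^{(k)}$ is precisely Lemma \ref{CSk}, so the only remaining content is the expansion $J_w^{(k)} = \sum_\la e^w_\la \Ti_\la$. I would proceed by induction on the number of nodes of $T^k(w)$, which is finite by Lemma \ref{Tlemma}.

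For the base case, $T^k(w) = \{w\}$ occurs exactly when $w$ is $k$-Grassmannian. Writing $w = w_\la$, equation (\ref{HeqT}) gives $J_{w_\la}^{(k)} = \Ti_\la$; on the other hand the unique leaf of $T^k(w_\la)$ has shape $\la$, so $e^{w_\la}_\mu = \delta_{\la,\mu}$ and the formula holds. For the inductive step, assume $w$ is increasing up to $k$ but not $k$-Grassmannian, so its last descent $r$ satisfies $r > k$. The transition equation (\ref{rec2}) then reads
\[
J_w^{(k)} \;=\; \sum_{v \in \Phi(w)} J_v^{(k)}.
\]
By Lemma \ref{Tlemma} each $v \in \Phi(w)$ is again increasing up to $k$, and the subtree $T^k(v)$ rooted at $v$ is strictly smaller than $T^k(w)$, so the inductive hypothesis supplies
\[
J_v^{(k)} \;=\; \sum_\la e^v_\la\, \Ti_\la
\]
for every $v \in \Phi(w)$. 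Since by construction the leaves of $T^k(w)$ are the disjoint union (in the rooted-tree sense) of the leaves of the subtrees $T^k(v)$ as $v$ ranges over $\Phi(w)$, summing the above identities and grouping by $\la$ yields
\[
J_w^{(k)} \;=\; \sum_\la \Big(\sum_{v \in \Phi(w)} e^v_\la\Big)\Ti_\la \;=\; \sum_\la e^w_\la\, \Ti_\la,
\]
as required. Note that only $k$-strict partitions $\la$ appear, because the leaves of $T^k(w)$ are $k$-Grassmannian by Lemma \ref{Tlemma}, and each has a $k$-strict shape by the bijection recalled in \S\ref{hypgp}.

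The main obstacle lies upstream rather than in the induction itself: the substantive inputs are the recursion (\ref{recurse}) for $\CS_w$ (used to derive (\ref{rec2}) via Lemma \ref{CSk}) and the finiteness plus the stability property of Lemma \ref{Tlemma}, which guarantees that the recursion never leaves the class of permutations that are increasing up to $k$. Granted these, the proof of (\ref{CStan}) amounts to tidy bookkeeping along the $k$-transition tree.
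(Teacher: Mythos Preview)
Your proposal is correct and follows essentially the same approach as the paper: both use Lemma~\ref{CSk} for the first equality, invoke the transition recursion (\ref{rec2}) to write $J_w^{(k)}=\sum_{v\in\Phi(w)}J_v^{(k)}$ when $w$ is not $k$-Grassmannian, and appeal to (\ref{HeqT}) at the leaves, with Lemma~\ref{Tlemma} guaranteeing termination. The only difference is that you phrase the argument as an explicit induction on the size of $T^k(w)$, whereas the paper leaves this implicit.
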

\begin{proof}
The equality $\CS_w^{(k)} = J_w^{(k)}$ is proved in Lemma \ref{CSk}.
We deduce from (\ref{rec2}) and the definition of $T^k(w)$ that
if $w$ is not $k$-Grassmannian, then 
\[
J_w^{(k)} = \sum_{v\in \Phi(w)} J_{v}^{(k)}.
\]
On the other hand, for any $k$-Grassmannian element $w=w_\la$, we have
$J^{(k)}_{w_\la} = \Ti_\la$, by (\ref{HeqT}).  This completes the
proof of the theorem.
\end{proof}

\begin{example}
\label{treeex}
The $1$-transition tree of $w=3\ov{1}254$ looks as follows.
\[
\includegraphics[scale=0.30]{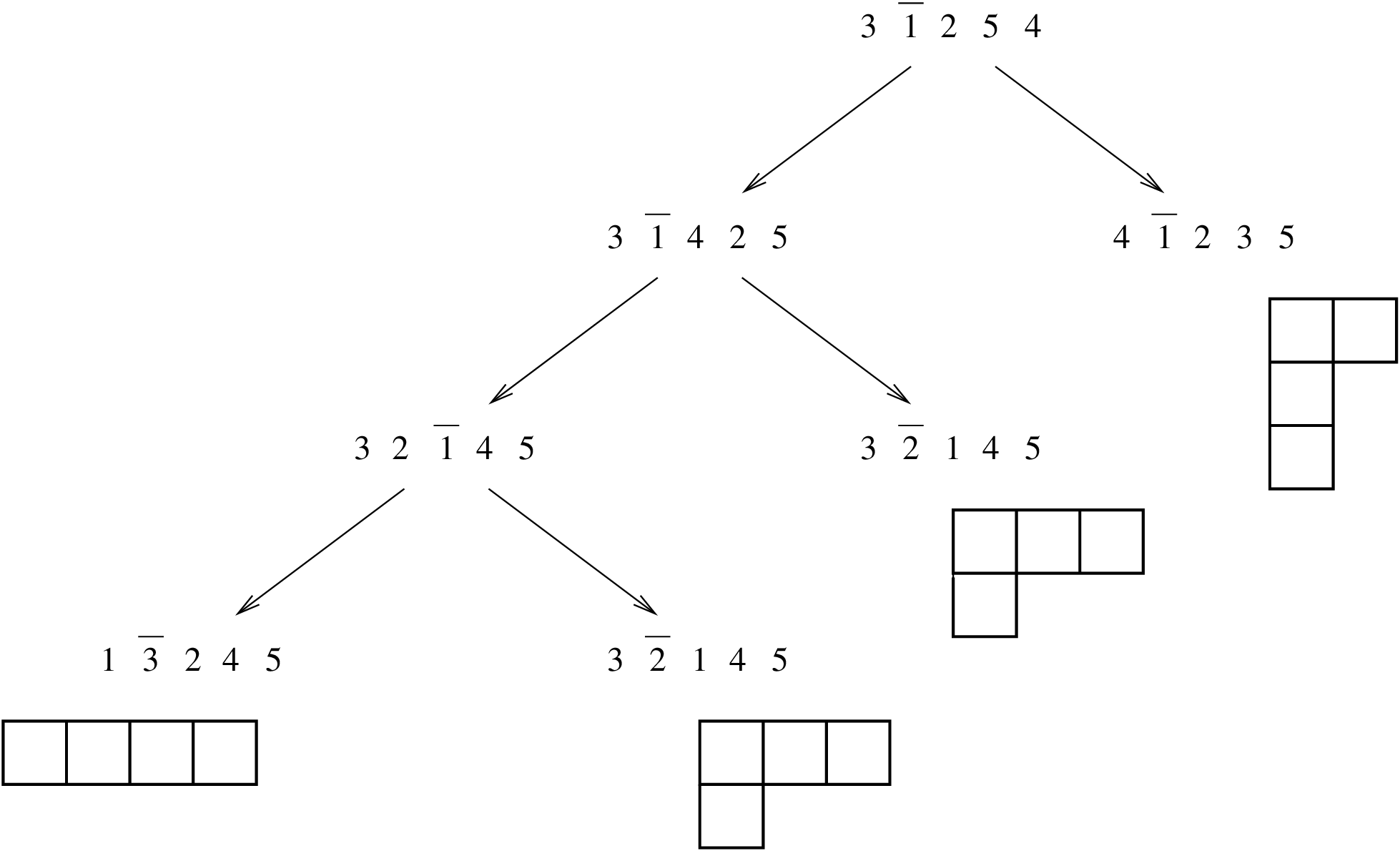}
\]
By Theorem~\ref{mainthm} we therefore obtain
\[
J_{3\ov{1}254}^{(1)} = \Ti_{(2,1,1)} + 2\,\Ti_{(3,1)} + \Ti_4.
\]
\end{example}

\medskip

When $k=0$, Theorem \ref{mainthm} states that for any $w\in W_\infty$,
\begin{equation}
\label{CStan0}
F_w(X) = \sum_{\la\, :\, |\la| = \ell(w)} e^w_{\la}\,Q_{\la}(X)
\end{equation}
summed over strict partitions $\la$. A transition based formula for
the constants $e^w_{\la}$ in equation (\ref{CStan0}) was proved by
Billey \cite{B}. There are several alternative combinatorial
descriptions of these numbers in this case, which include a formula in
terms of {\em Kra\'skiewicz tableaux} \cite{Kr, L2} of shape $\la$.  It
would be interesting to have an analogous tableau formula for the
$e^w_{\la}$ in the general case where $k>0$. Another natural question
is whether the mixed Stanley coefficients can be used to obtain a
Littlewood-Richardson type rule for theta polynomials; some positive
results in this direction are explained in \S \ref{mrs}.

\begin{example}
Consider the {\em double mixed Stanley function}
\[
J_w(X\,;Y/Z) = \sum_{\om uv=w}G_{\om^{-1}}(-Z)F_u(X)G_v(Y)
\]
summed over all reduced factorizations $\om uv=w$ with $\om,v\in
S_\infty$. Fix integers $j,k\geq 0$, and suppose that $w\in W_\infty$
is increasing up to $k$ and $w^{-1}$ is increasing up to $j$. Then
$\CS_w^{(-j,k)}(X\,;Y,Z) = J_w^{(-j,k)}(X\,;Y/Z)$. However, the
analogue of Theorem \ref{mainthm} fails, at least for fixed $k$. For
an example with $j=k=1$, one checks that $\CS_{231}^{(-1,1)}$ cannot
be written as an integer linear combination of
$\CS_{2\ov{1}3}^{(-1,1)}$ and $\CS_{312}^{(-1,1)}$, while $2\ov{1}3$
and $312$ are the only $1$-Grassmannian elements of length two in
$W_\infty$.
\end{example}

\subsection{Reduced words and $k$-bitableaux}
\label{redwds}

The type A Stanley symmetric functions $G_\om$ were used in \cite{Sta}
to express the number of reduced words of a permutation $\om$ in terms
of the numbers $f^\la$ of standard tableaux of shape $\la$, for the
partitions $\la$ which appear in equation (\ref{Geq}). Similarly, the
type C Stanley symmetric functions $F_w$ can be used to compute the
number of reduced words for an element $w\in W_\infty$, as shown in
\cite{H, Kr}. We proceed to give an analogue of these results for the
mixed Stanley functions $J_w$.

Let {\bf P} denote the ordered alphabet
$\{1'<2'<\cdots<k'<1<2<\cdots\}$.  The symbols $1',\ldots,k'$ are
called {\em marked}, while the rest are {\em unmarked}.  Let $\la$ be
a $k$-strict partition.  A {\em $k$-bitableau} $U$ of shape $\la$ is a
filling of the boxes in $\la$ with elements of {\bf P} which is weakly
increasing along each row and down each column, such that the marked
entries are strictly increasing along each row and the unmarked
entries form a {\em $k$-tableau} $T$. We refer to \cite[\S 5]{T4} for
the definition of a $k$-tableau and more details.  Each $k$-bitableau
$U$ has an associated multiplicity $r(U)$, which is a nonnegative
integer.  Let $(xy)^U=\prod_ix_i^{m_i}\prod_jy_j^{n_j}$, where $m_i$
(respectively $n_j$) denotes the number of times that $i$
(respectively $j'$) appears in $U$. According to \cite[Thm.\ 5]{T4},
we have
\begin{equation}
\label{thtab}
\Ti_\la(X\,;Y) = \sum_U 2^{r(U)}(xy)^U
\end{equation}
summed over all $k$-bitableaux $U$ of shape $\la$.

If a $k$-bitableau $U$ contains exactly $m$ marked entries, we say
that $U$ is of {\em type $m$}. $U$ is called {\em standard} if the
entries $1',\ldots,m',1,\ldots,n$ each appear once in $U$ for some
$m$ and $n$; in this case we have $r(U)=n$.

\begin{defn}
Let $\la$ and $\mu$ be $k$-strict partitions with $\mu\subset\la$, and
$m$ be a nonnegative integer with $m\leq k$. We denote by
$g^{\la/\mu}$ the number of standard $k$-tableaux of skew shape
$\la/\mu$, and by $h^\la_m$ the number of standard $k$-bitableaux of
shape $\la$ and type $m$. We say that a reduced word for $w\in
W_\infty$ has {\em type $m$} if the last $m$ letters of the word are
positive.
\end{defn}

\begin{prop}
For any $k$-strict partition $\la$ and integer $m\leq k$, we have
\[
h^\la_m = \sum_{\mu\subset\la,\, |\mu|=m}  f^\mu g^{\la/\mu}
\]
where the sum is over all partitions $\mu\subset\la$ with $|\mu|=m$. 
\end{prop}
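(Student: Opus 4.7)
The plan is to set up a bijection that decomposes each standard $k$-bitableau $U$ of shape $\la$ and type $m$ into a pair consisting of a standard Young tableau of shape $\mu$ carrying the marked entries and a standard $k$-tableau of skew shape $\la/\mu$ carrying the unmarked entries, where $\mu\subset\la$ is the subdiagram of boxes of $U$ holding marked labels. Once this bijection is established, summing over all $\mu\subset\la$ with $|\mu|=m$ yields the asserted identity. Note that the hypothesis $m\leq k$ is what guarantees that the $m$ marked symbols $1',\ldots,m'$ all exist in the alphabet \textbf{P}.

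The key step is to show that the set of boxes of $U$ occupied by marked entries is always a Young subdiagram $\mu\subset\la$ of size $m$. This uses that in $\textbf{P}=\{1'<\cdots<k'<1<2<\cdots\}$, every marked symbol precedes every unmarked symbol; combined with the weakly increasing conditions along rows and down columns of a $k$-bitableau, this forces the set of marked boxes to be closed under taking boxes weakly to the upper-left in $\la$, hence to form a Young subdiagram. Because the marked labels $1',\ldots,m'$ are distinct and strictly increasing along rows (by definition of a $k$-bitableau) and therefore also strictly increasing down columns (from distinctness together with weak column increase), the relabeling $i'\mapsto i$ turns $U|_\mu$ into a standard Young tableau on $\mu$, contributing a factor of $f^\mu$. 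The unmarked part $U|_{\la/\mu}$ is a standard $k$-tableau of skew shape $\la/\mu$ directly from the definition of a $k$-bitableau, contributing a factor of $g^{\la/\mu}$. The inverse map is evident: given $\mu$ of size $m$, a standard Young tableau on $\mu$, and a standard $k$-tableau on $\la/\mu$, one marks the first filling and leaves the second unmarked; the global weakly-increasing conditions across the interface $\mu\subset\la$ hold automatically since marked entries precede all unmarked ones in \textbf{P}.

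The one technical point I expect to require care—and which is the main potential obstacle—is confirming that the definition of a standard $k$-tableau of skew shape $\la/\mu$ from \cite[\S 5]{T4} is \emph{intrinsic} to the filling of $\la/\mu$, so that there are no hidden compatibility conditions imposed by the inner shape $\mu$ that might fail to be automatic in the reconstruction step. Once this is verified by inspection of the definition in loc.\ cit., the bijection produces the fiber over a fixed $\mu$ of size $f^\mu\, g^{\la/\mu}$ standard $k$-bitableaux, and summing over $\mu\subset\la$ with $|\mu|=m$ completes the proof.
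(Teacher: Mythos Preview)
Your argument is correct but takes a genuinely different route from the paper's. You work directly at the tableau level, splitting each standard $k$-bitableau into its marked subdiagram $\mu$ (which carries a standard Young tableau after relabeling $i'\mapsto i$) and its unmarked complement $\la/\mu$ (which carries a standard $k$-tableau by the very definition of a $k$-bitableau). The paper instead passes through reduced words: invoking results from \cite{T4}, it identifies $h^\la_m$ with the number of reduced words of type $m$ for $w_\la$, identifies $g^{\la/\mu}$ with the number of reduced words for $w_\la w_\mu^{-1}$ (for $\mu\subset\la$ with $\mu_1\leq k$), and uses that $f^\mu$ is the number of reduced words for the Grassmannian permutation $w_\mu\in S_\infty$; the identity then follows by factoring reduced words at the position $\ell(w_\la)-m$. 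Your approach is more elementary and entirely self-contained, requiring nothing beyond the definitions in \S\ref{redwds}; the paper's approach, by contrast, embeds the identity in the reduced-word framework that underlies the whole subsection and makes the connection to Proposition~\ref{hkgeneral} immediate. Your flagged concern about the intrinsicness of the skew $k$-tableau condition is the right thing to check, and it is resolved by observing that the paper's definition of a $k$-bitableau imposes exactly that the unmarked entries form a $k$-tableau, with no additional interface condition on $\mu$; since every marked symbol precedes every unmarked one in $\mathbf{P}$, the global weak-monotonicity constraint across the boundary of $\mu$ is automatic in the reconstruction.
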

\begin{proof}
Suppose that $\la$ is a $k$-strict partition.  Using e.g.\
\cite[Prop.\ 5]{T4}, one can construct a bijection between the set of
standard $k$-bitableaux of shape $\la$ and type $m$ and the set of
reduced words of type $m$ for $w_\la$. According to \cite[Thm.\ 6 and
Ex.\ 9]{T4}, for any $k$-strict partition $\mu$ with $\mu\subset\la$
and $\mu_1\leq k$, the number $g^{\la/\mu}$ of standard $k$-tableaux
of shape $\la/\mu$ is equal to the number of reduced words for $w_\la
w_\mu^{-1}$. Moreover, the number $f^\mu$ of standard tableaux of
shape $\mu$ equals the number of reduced words for the permutation
$w_\mu\in S_\infty$. The result follows.
\end{proof}

The $k=0$ case of the next result is due to Haiman \cite{H} and 
Kra\'skiewicz \cite{Kr}.

\begin{prop}
\label{hkgeneral}
Let $w\in W_\infty$ be increasing up to $k$ and let $m$ be an
integer with $0\leq m \leq \min(k,\ell(w))$. Then the number of
reduced words of type $m$ for $w$ is equal to $\sum_\la e_\la^w
h^\la_m$, where the sum is over all $k$-strict partitions $\la$.
\end{prop}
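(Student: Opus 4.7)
The plan is to extract the coefficient of the monomial $x_1x_2\cdots x_n\,y_1\cdots y_m$, where $n=\ell(w)-m$, from both sides of the identity
\[
J_w^{(k)}(X\,;Y) \,=\, \sum_\la e^w_\la\,\Ti_\la(X\,;Y)
\]
provided by Theorem~\ref{mainthm}. Since $m\leq k$, this monomial involves only $y_1,\ldots,y_m$ and is unaffected by the truncation $[0,k]$, so it suffices to compare coefficients in the untruncated expressions $J_w$ and $\Ti_\la$.

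For the right hand side, the tableau formula (\ref{thtab}) writes $\Ti_\la(X\,;Y)=\sum_U 2^{r(U)}(xy)^U$, and only the standard $k$-bitableaux of shape $\la$ and type $m$ contribute to $x_1\cdots x_n y_1\cdots y_m$, each carrying weight $2^{r(U)}=2^n$. Hence the coefficient of this monomial on the right equals $2^n\sum_\la e^w_\la\,h^\la_m$. For the left hand side, I would unwind $J_w=\langle C(X)A(Y),w\rangle$ via Definition~\ref{Stdef} together with the elementary computations
\[
[x]\,C(x) \,=\, 2(u_0+u_1+\cdots+u_{n-1}), \qquad [y]\,A_1(y) \,=\, u_1+\cdots+u_{n-1}.
\]
The uniform factor of~$2$ in the first formula is the key point: it arises either from the coefficient~$2$ in the central factor $(1+2xu_0)$, or from the two identical factors $(1+xu_j)$ flanking $u_0$ when $j\geq 1$. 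Extracting our monomial from $C(x_1)\cdots C(x_n)A_1(y_1)\cdots A_1(y_m)$ and pairing against $u_w$ therefore produces $2^n$ times the number of sequences $a_1\cdots a_n b_1\cdots b_m$ of indices with each $b_j\geq 1$ whose product equals $u_w$; by the nilCoxeter relations, such sequences are precisely the reduced words for $w$ of type $m$.

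Equating the two coefficients and dividing by the common factor $2^n$ yields the proposition. The one substantive point to verify is the uniform factor of~$2$ per position on the nilCoxeter side; once this is in place, it pairs precisely with the $2^{r(U)}=2^n$ weight in the bitableau formula, and everything else is bookkeeping. This argument mirrors the reasoning used in the $k=0$ cases of Haiman~\cite{H} and Kra\'skiewicz~\cite{Kr}, where the analogous identification relates $F_w$ to $Q_\la$.
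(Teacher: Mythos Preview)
Your proof is correct and follows essentially the same approach as the paper: both extract the coefficient of $x_1\cdots x_n\,y_1\cdots y_m$ from the identity $J_w^{(k)}=\sum_\la e^w_\la\,\Ti_\la$ of Theorem~\ref{mainthm}, using the tableau formula (\ref{thtab}) on the right and the nilCoxeter expansion on the left to produce matching factors of $2^n$. You have simply supplied the details behind the paper's ``it is clear,'' in particular the computation $[x]\,C(x)=2(u_0+\cdots+u_{n-1})$ showing where the uniform factor of $2$ per $x$-variable comes from.
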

\begin{proof}
It is clear that the number of reduced words of type $m$ for $w$
equals $2^{-n}$ times the coefficient of $x_1\cdots x_n y_1\cdots y_m$
in $J_w$, where $n=\ell(w)-m$.  On the other hand, this coefficient is
also  equal to $2^n\sum_\la e_\la^w h^\la_m$, by (\ref{CStan}) and
(\ref{thtab}).
\end{proof}

It would be interesting to find a bijective proof of Proposition
\ref{hkgeneral}.

\subsection{Multiplication rules}
\label{mrs}

We show here how Theorem \ref{mainthm} may be used to obtain
Littlewood-Richardson type rules for the product of two theta
polynomials. In the case of $k=0$, we observe that the transition
equations in \cite{B} give a combinatorial rule for the structure
constants in the product of {\em any} two Schur $Q$-functions. This
answers a question of Manivel in the affirmative (compare with
\cite[\S 4]{BL}).

We will actually work with the Schur $P$-functions, which are defined
by the equation $P_\la = 2^{-\ell(\la)} Q_\la$, for any strict
partition $\la$.  Given two strict partitions $\mu$ and $\nu$, there
are nonnegative integers $f^\la_{\mu\nu}$ such that
\[
P_{\mu}P_{\nu} = \sum_{\la}f^{\la}_{\mu\nu}P_{\la}.
\]
The $f_{\mu\nu}^\la$ agree with the Schubert structure constants on
maximal orthogonal Grassmannians $\OG(n,2n+1)$, when $n$ is
sufficiently large. Combinatorial rules for the numbers
$f^{\la}_{\mu\nu}$ may be found in \cite{Sa, W, St1, Sh}.

\begin{prop}
The coefficient $f^{\la}_{\mu\nu}$ is equal to the number of leaves of 
$T^0(w_\la w_\mu^{-1})$ of shape $\nu$, if $\mu\subset \la$, and is equal 
to zero, otherwise.
\end{prop}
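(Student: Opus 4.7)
The plan is to identify both sides of the claimed equality with the same coefficient in a Schur $Q$-function expansion. Concretely, I will show that $F_{w_\la w_\mu^{-1}}(X) = Q_{\la/\mu}(X)$ and then read off the coefficient of $Q_\nu$ on both sides, using the classical skew-$Q$ expansion on one hand and Theorem~\ref{mainthm} at $k=0$ on the other.

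First I would dispose of the case $\mu\not\subset\la$. The classical shifted Littlewood--Richardson rule forces $f^\la_{\mu\nu}=0$, matching the statement; on the other hand, if $T^0(w_\la w_\mu^{-1})$ had a leaf of shape $\nu$ then Theorem~\ref{mainthm} would contribute a nonzero $Q_\nu$-coefficient to $F_{w_\la w_\mu^{-1}}$, which contradicts the skew-$Q$ identification that holds only when $\mu\subset\la$.

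Now assume $\mu\subset\la$. The central step is to establish $F_{w_\la w_\mu^{-1}} = Q_{\la/\mu}$. I would do this by specializing the Example following Lemma~\ref{CSk} to $k=0$:
\[
J_{w_\la}(X\,;Y) \;=\; \sum_{\mu\subset\la} F_{w_\la w_\mu^{-1}}(X)\, s_{\mu'}(Y),
\]
and comparing with the classical coproduct $Q_\la(X,Y) = \sum_\mu Q_{\la/\mu}(X)\, Q_\mu(Y)$. The specialization of the $Y$-variables implicit in the skew $F$-function of \cite[\S 5]{T4} translates $Q_\mu(Y)$ into $s_{\mu'}(Y)$, and reading off the coefficient of $s_{\mu'}(Y)$ yields $F_{w_\la w_\mu^{-1}}(X) = Q_{\la/\mu}(X)$.

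Once this identification is in hand, the rest is bookkeeping. Standard duality $\langle Q_\rho, P_\s\rangle = \delta_{\rho\s}$ together with the adjoint property $\langle Q_{\la/\mu}, P_\nu\rangle = \langle Q_\la, P_\mu P_\nu\rangle = f^\la_{\mu\nu}$ yields $Q_{\la/\mu} = \sum_\nu f^\la_{\mu\nu}\, Q_\nu$. On the other hand, Theorem~\ref{mainthm} applied at $k=0$ (every element of $W_\infty$ is vacuously increasing up to $0$) gives $F_{w_\la w_\mu^{-1}} = \sum_\nu e^{w_\la w_\mu^{-1}}_\nu\, Q_\nu$. Comparing these two expansions in the $\Z$-basis $\{Q_\nu\}$ of $\Gamma$ delivers $f^\la_{\mu\nu} = e^{w_\la w_\mu^{-1}}_\nu$, which by definition is the number of leaves of $T^0(w_\la w_\mu^{-1})$ of shape $\nu$. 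The main obstacle will be the identification $F_{w_\la w_\mu^{-1}} = Q_{\la/\mu}$, since it requires reconciling the type C splitting of $J_{w_\la}$ with the Hopf-algebra coproduct of $Q_\la$ in $\Gamma$; once that bridge is crossed, the two expansions line up term by term.
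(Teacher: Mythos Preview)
Your overall strategy coincides with the paper's: both reduce the Proposition to the identity $F_{w_\la w_\mu^{-1}} = Q_{\la/\mu}$, then combine the skew expansion $Q_{\la/\mu} = \sum_\nu f^\la_{\mu\nu} Q_\nu$ with the $k=0$ case (\ref{CStan0}) of Theorem~\ref{mainthm}. The difference lies only in how you justify that key identity.

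Your proposed route --- specialising the Example (equation (\ref{Jex})) to $k=0$ and matching against the coproduct $Q_\la(X,X') = \sum_\mu Q_{\la/\mu}(X) Q_\mu(X')$ --- does not go through. When $k=0$, the element $w_\la$ is $0$-Grassmannian and hence has no descent at any positive position; consequently the only reduced factorisation $uv = w_\la$ with $v \in S_\infty$ is the trivial one $v=1$. Thus $J_{w_\la}(X\,;Y) = F_{w_\la}(X) = Q_\la(X)$ is independent of $Y$, and the sum in (\ref{Jex}) at $k=0$ contains only the term $\mu=\emptyset$. You therefore cannot extract $F_{w_\la w_\mu^{-1}}$ for nontrivial $\mu$ from this expansion. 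Nor is there any specialisation turning $Q_\mu(X')$ into $s_{\mu'}(Y)$: the factor $A(Y)$ in the definition of $J_w$ is a type~A operator, not a second copy of $C(X)$, so $J_{w_\la}(X\,;Y)$ is simply not the Hopf coproduct of $Q_\la$.

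The paper handles this step by direct citation: the equality $Q_{\la/\mu} = F_{w_\la w_\mu^{-1}}$ is recorded as a consequence of \cite[Thm.~8.2]{FK2} and \cite[Cor.~6.6]{St2}, and also as a special case of \cite[Thm.~6]{T4}. Once you invoke that result, the remainder of your argument (the skew expansion (\ref{skeweq}) and comparison with (\ref{CStan0})) is exactly the paper's proof. Your treatment of the case $\mu \not\subset \la$ is also slightly off: the Proposition asserts only that $f^\la_{\mu\nu}=0$ there, which is immediate; it makes no claim about the tree $T^0(w_\la w_\mu^{-1})$ in that case, so the second half of that paragraph is unnecessary.
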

\begin{proof}
The structure constants $f^{\la}_{\mu\nu}$ appear in the expansion of
{\em skew Schur $Q$-functions}
\begin{equation}
\label{skeweq}
Q_{\la/\mu} = \sum_{\nu} f^{\la}_{\mu\nu}Q_{\nu}
\end{equation}
in the $Q$-basis (see for example \cite[III.5 and III.8]{M2}).
Following \cite[Thm.\ 8.2]{FK2} and \cite[Cor.\ 6.6]{St2}, the skew
Schur $Q$-functions are known to be equal to certain type C Stanley
symmetric functions. In fact, we have $Q_{\la/\mu}= F_{w_\la
w_\mu^{-1}}$, where $w_\la$ and $w_\mu$ are the $0$-Grassmannian
elements associated to $\la$ and $\mu$ (this is a special case of
\cite[Thm.\ 6]{T4}). The proposition follows from this, using
(\ref{CStan0}) and (\ref{skeweq}).
\end{proof}

Now let $k$ be any nonnegative integer. The following result is an
analogue of \cite[Lemma 16]{BL} for the functions $J_w$.

\begin{lemma}
\label{multlem}
For $w\in W_n$ and $v\in S_\infty$, we have
\[
J_wJ_v = J_{w\times v}.
\]
\end{lemma}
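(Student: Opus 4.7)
The plan is to expand $J_w J_v$ using Definition~\ref{Stdef}, apply the product identities $F_u F_\al = F_{u\times \al}$ for $u\in W_n$, $\al\in W_\infty$ (which is \cite[Lemma~16]{BL}) and $G_{u'} G_\be = G_{u'\times \be}$ for $u'\in S_n$, $\be\in S_\infty$ (the standard type A analogue), and then reindex the sum as $J_{w\times v}$. Both product identities rest on the fact that in the nilCoxeter algebra $\cW_\infty$, the generators $u_0,u_1,\ldots,u_{n-1}$ commute with the generators $u_{n+1},u_{n+2},\ldots$, since any two such indices differ by at least two.

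Expanding using Definition~\ref{Stdef} twice gives
\[
J_w(X\,;Y)\, J_v(X\,;Y) = \sum F_u(X)\,F_\al(X)\,G_{u'}(Y)\,G_\be(Y),
\]
summed over reduced factorizations $u u'=w$ with $u'\in S_\infty$ and $\al\be=v$. Since $v\in S_\infty$ and $w\in W_n$, the factors automatically satisfy $u\in W_n$, $u'\in S_n$, and $\al,\be\in S_\infty$. Invoking the two product identities transforms this into $\sum F_{u\times \al}(X)\,G_{u'\times \be}(Y)$, and setting $a=u\times \al$, $b=u'\times \be$ yields a reduced factorization $ab=w\times v$ with $b\in S_\infty$.

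The main obstacle is to verify that every reduced factorization $ab=w\times v$ with $b\in S_\infty$ arises in this way from a unique quadruple $(u,u',\al,\be)$. For this I would observe that $w\times v$ has no descent at position $n$, since $(w\times v)_n = w_n \leq n < v_1+n = (w\times v)_{n+1}$. Consequently, concatenating a reduced word for $w$ (using letters $\leq n-1$) with a reduced word for $1_n\times v$ (using letters $\geq n+1$) produces a reduced word of $w\times v$ of the correct length that avoids $u_n$; Matsumoto's theorem together with an inspection of the braid and commutation relations involving $u_n$ shows that no sequence of moves can introduce $u_n$ into a word that does not already contain it. Therefore every reduced word of $w\times v$ is a shuffle of a reduced word of $w$ with a (shifted) reduced word of $v$, and because these two families of letters pairwise commute, any such shuffle sorts uniquely into ``small letters first, then large.'' Applying this sorting to the prefix $a$ and the suffix $b$ of such a word yields the unique decomposition $a=u\times \al$, $b=u'\times \be$ with $uu'=w$ and $\al\be=v$ reduced, giving the desired bijection and establishing $J_w J_v = J_{w\times v}$.
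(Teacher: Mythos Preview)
Your proof is correct and follows essentially the same approach as the paper's: both arguments rest on the fact that every reduced word of $w\times v$ avoids the letter $n$ and hence splits uniquely into commuting ``small'' ($\leq n-1$) and ``large'' ($\geq n+1$) parts, together with the multiplicativity of $F$ and $G$ under the $\times$ operation. The only cosmetic difference is direction---you expand $J_wJ_v$ and reassemble it as $J_{w\times v}$, whereas the paper expands $J_{w\times v}$ and factors each term---and you spell out the Matsumoto-theorem justification for the support of reduced words a bit more explicitly than the paper does.
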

\begin{proof}
Since $a_1\cdots a_\ell$ is a reduced word for $v\in S_m$ if and only
if $(a_1+n)\cdots(a_\ell+n)$ is a reduced word for $1_n\times v$, we
see that $G_{1_n\times v} = G_v$, $F_{1_n\times v} = F_v$, and $J_v =
J_{1_n\times v}$ for each $n\geq 1$.  The reduced words for $w\times
v$ are all obtained by intertwining a reduced word for $w$ with a
reduced word for $1_n\times v$. Moreover, given any reduced
factorization $ab=w\times v$, with $b\in S_\infty$, we have
$a=a_1\times a_2$ and $b=b_1 \times b_2$ where $a_1,b_1\in W_n$ and
$a_2,b_1, b_2\in S_\infty$. We deduce that
\begin{align*}
J_{w\times v}(X\,;Y) &= \sum_{(a_1\times a_2)(b_1\times b_2) = w\times v}
F_{a_1\times a_2}(X) G_{b_1\times b_2}(Y) \\
&= \sum_{(a_1\times a_2)(b_1\times b_2) = w\times v}
F_{a_1}(X)F_{1_n\times a_2}(X) G_{b_1}(Y)G_{1_n\times b_2}(Y) \\
&= \sum_{a_1b_1 = w, \ a_2 b_2 =  v}
F_{a_1}(X)G_{b_1}(Y) F_{a_2}(X) G_{b_2}(Y) \\
&= J_w(X\,;Y) J_v(X\,;Y). \qedhere
\end{align*}
\end{proof}

We obtain a combinatorial rule for multiplying two theta polynomials,
when one of the factors is indexed by a `small' partition (compare
with \cite[Cor.\ 17]{BL}).

\begin{prop}
\label{prodprop}
Let $\mu$ and $\nu$ be $k$-strict partitions with $\nu_i\leq k$ for 
all $i$, and consider the product expansion
\[
\Ti_\mu\Ti_\nu = \sum_\la \varphi_{\mu\nu}^\la \Ti_\la
\]
summed over $k$-strict partitions $\la$. Then $\varphi_{\mu\nu}^\la$
is equal to the number of leaves of $T^k(w_\mu\times w_\nu)$ of shape
$\la$.
\end{prop}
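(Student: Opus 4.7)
The plan is to assemble Proposition \ref{prodprop} directly from the three key ingredients already in place: the correspondence (\ref{HeqT}) between $k$-Grassmannian mixed Stanley functions and theta polynomials, the multiplicativity Lemma \ref{multlem}, and the transition expansion of Theorem \ref{mainthm}. The observation that unlocks the argument is that the hypothesis $\nu_i\leq k$ for all $i$ forces $w_\nu$ to lie in $S_\infty$, not merely in $W_\infty$: in the description of $k$-Grassmannian elements from \S\ref{hypgp}, this is the statement that the strict partition $\zeta$ (recording the parts $>k$) is empty, so no sign changes occur in the one-line notation of $w_\nu$.

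First I would verify that $w_\mu \times w_\nu$ is still increasing up to $k$. Since $w_\mu$ is $k$-Grassmannian, its first $k$ entries satisfy $0<(w_\mu)_1<\cdots<(w_\mu)_k$, and the definition of $\times$ appends values exceeding $n$ in positions past $n\geq k$, so the increasing-up-to-$k$ property is preserved. This makes Theorem \ref{mainthm} applicable to $w_\mu\times w_\nu$.

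Next I would chain the following identities. By (\ref{HeqT}), $\Ti_\mu = J_{w_\mu}^{(k)}$ and $\Ti_\nu = J_{w_\nu}^{(k)}$. Since the operation $P\mapsto P^{(k)}$ is simply the specialization of variables (setting $y_j=0$ for $j>k$ and $z_j=0$), it commutes with multiplication, so
\[
\Ti_\mu\,\Ti_\nu \;=\; J_{w_\mu}^{(k)}\,J_{w_\nu}^{(k)} \;=\; \bigl(J_{w_\mu}\,J_{w_\nu}\bigr)^{(k)}.
\]
Because $w_\nu\in S_\infty$, Lemma \ref{multlem} applies and gives $J_{w_\mu}J_{w_\nu} = J_{w_\mu\times w_\nu}$. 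Combining this with Theorem \ref{mainthm} yields
\[
\Ti_\mu\,\Ti_\nu \;=\; J_{w_\mu\times w_\nu}^{(k)} \;=\; \sum_{\la} e^{\,w_\mu\times w_\nu}_{\la}\,\Ti_\la,
\]
summed over $k$-strict partitions $\la$ of size $|\mu|+|\nu|$.

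Finally, since the $\Ti_\la$ for $\la$ $k$-strict form a $\Z$-basis of $\Gamma^{(k)}$, one may read off $\varphi_{\mu\nu}^\la = e^{\,w_\mu\times w_\nu}_{\la}$, which by definition counts the leaves of $T^k(w_\mu\times w_\nu)$ of shape $\la$. The only real obstacle is the bookkeeping point in the opening paragraph — checking that under the hypothesis $\nu_i\leq k$ the element $w_\nu$ belongs to $S_\infty$ — and everything else is a two-line assembly of the previously established results; no new combinatorial input is required.
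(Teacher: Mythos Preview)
Your proof is correct and follows essentially the same route as the paper: both arguments hinge on the observation that $\nu_i\leq k$ for all $i$ forces $w_\nu\in S_\infty$, then combine (\ref{HeqT}), Lemma \ref{multlem}, and Theorem \ref{mainthm} to obtain $\Ti_\mu\Ti_\nu = J^{(k)}_{w_\mu\times w_\nu}=\sum_\la e^{w_\mu\times w_\nu}_\la \Ti_\la$. Your version is slightly more explicit in checking that $w_\mu\times w_\nu$ is increasing up to $k$ and that the specialization $P\mapsto P^{(k)}$ respects products, but these are the same steps the paper leaves implicit.
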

\begin{proof}
Observe that $w_\nu\in S_\infty$ if and only if $\nu_i\leq k$ for all
$i$. Lemma \ref{multlem} therefore applies and gives the equation
\[
\Ti_\mu\Ti_\nu = J^{(k)}_{w_\mu} J^{(k)}_{w_\nu} =
J^{(k)}_{w_{\mu}\times w_{\nu}}.
\]
The result now follows from Theorem \ref{mainthm}. Recall from 
\cite[\S 5.4]{BKT2} that in this situation we have 
$\Ti_\nu = R^0\, \ti_\nu  = \det(\ti_{\nu_i+j-i})_{i,j}$.
\end{proof}

\begin{example} 1) 
For the $1$-strict partitions $\mu=(2,1)$ and $\nu=1$ we have
$w_{\mu}\times w_\nu = 3\ov{1}254\in W_5$. Example \ref{treeex}
therefore gives $\Ti_{(2,1)}\Ti_1 = \Ti_{(2,1,1)} + 2\,\Ti_{(3,1)} +
\Ti_4$.

\medskip
\noindent
2) When $k\geq 1$ and $\nu=(1^p)$ for some $p\geq 0$, Proposition
\ref{prodprop} gives a `Pieri type rule' which evaluates the products
$\Ti_{(1^p)}\Ti_\mu$ for any $k$-strict partition $\mu$. A different
combinatorial rule for the same Pieri products was obtained by Pragacz
and Ratajski \cite{PR1}.
\end{example}

\section{Splitting type C Schubert polynomials}
\label{splitC}

In this section we give splitting theorems for the single and double
type C Schubert polynomials $\CS_w$. For any $k\geq 0$, let
$Y_{>k}=(y_{k+1},y_{k+2},\ldots)$.  The following proposition
generalizes the $k=0$ case from \cite[Thm.\ 3]{BH}.
\begin{prop}
\label{basis}
If $w\in W_\infty$ is increasing up to $k$, then 
\begin{equation}
\label{basic}
\CS_w(X\,;Y) = \sum_{u(1_k\times v) = w}
J^{(k)}_u(X\, ; Y)\AS_v(Y_{>k})
\end{equation}
where the sum is over all reduced factorizations $u(1_k\times v) = w$ with
$v\in S_\infty$. The Schubert polynomials $\CS_w(X\,;Y)$ for $w\in
W_\infty$ increasing up to $k$ form a $\Z$-basis for the ring
$\Gamma^{(k)}[Y_{>k}]=\Gamma^{(k)}[y_{k+1},y_{k+2},\ldots]$.
\end{prop}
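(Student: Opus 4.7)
The plan is to derive the splitting formula (\ref{basic}) from the nilCoxeter presentation of $\CS_w$ together with a type A splitting lemma, and then obtain the basis claim by a unitriangularity argument based on parabolic decomposition in $W_\infty$.

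First I would use that the product $A_1(y_1)\cdots A_{m-1}(y_{m-1})$ lies in the symmetric group subalgebra of $\cW_m$ and expands as $\sum_{v\in S_m}\AS_v(Y)\,u_v$, so that the definition of $\CS_w$ gives
\[
\CS_w(X;Y)=\sum_{uv=w,\,v\in S_\infty}F_u(X)\AS_v(Y).
\]
A short length argument shows that when $w$ is increasing up to $k$, any right factor $v\in S_\infty$ in such a reduced factorization is also increasing up to $k$: otherwise $\ell(vs_i)<\ell(v)$ for some $i<k$ would force $\ell(ws_i)\leq\ell(u)+\ell(vs_i)<\ell(w)$, contradicting the hypothesis.

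The technical heart of the argument is the following type A splitting lemma: for $v\in S_\infty$ increasing up to $k$,
\[
\AS_v(Y)=\sum_{v_1(1_k\times v_2)=v}G_{v_1}(y_1,\ldots,y_k)\,\AS_{v_2}(Y_{>k})
\]
summed over reduced factorizations with $v_1,v_2\in S_\infty$. To prove it I would factor $A_1(y_1)\cdots A_{m-1}(y_{m-1})$ as $[A_1(y_1)\cdots A_k(y_k)]\cdot[A_{k+1}(y_{k+1})\cdots A_{m-1}(y_{m-1})]$. The tail involves only $u_{k+1},\ldots,u_{m-1}$ and under the shift $u_{k+i}\mapsto u_i$ becomes the defining product for type A Schubert polynomials in the smaller algebra, so its $u_{1_k\times v_2}$-coefficient equals $\AS_{v_2}(Y_{>k})$. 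The head has $u_{v_1}$-coefficient $\AS_{v_1}^{(k)}(Y)$, which equals $G_{v_1}(y_1,\ldots,y_k)$ by (\ref{AG}) because $v_1$ agrees with $v$ in positions $1,\ldots,k$ and hence inherits the $k$-increasing property. Substituting this lemma into the expansion of $\CS_w$ and re-indexing by $u'=uv_1$, the inner sum collapses to $J_{u'}^{(k)}(X;Y)$ by Definition \ref{Stdef}, yielding (\ref{basic}).

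For the basis claim, I would use the parabolic decomposition of $W_\infty$ modulo $W_P=\langle s_i:i\neq k\rangle$. Since $W_P$ is the internal direct product $\langle s_0,\ldots,s_{k-1}\rangle\times(1_k\times S_\infty)$ and the minimal left coset representatives are the $k$-Grassmannian elements, every $w\in W_\infty$ factors uniquely as $w=w^P\cdot u\cdot(1_k\times v)$ with lengths additive. The hypothesis that $w$ is increasing up to $k$ forces $u=e$, since any sign change in $u$ would make some $w(i)$ negative for $i\leq k$ and any nontrivial permutation would destroy the increasing order of $w(1)<\cdots<w(k)$. This yields a bijection $w\leftrightarrow(\la(w),v(w))$ between $k$-increasing $w$ and pairs $(\la,v)$ indexing the standard basis $\{\Ti_\la\AS_v(Y_{>k})\}$ of $\Gamma^{(k)}[Y_{>k}]$. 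Expanding $J_{u'}^{(k)}=\sum_\la e^{u'}_\la\Ti_\la$ via Theorem \ref{mainthm} (each such $u'$ shares the first $k$ values of $w$ and so is $k$-increasing), one obtains a change-of-basis formula. For fixed $v$ there is at most one $u'$ with $u'(1_k\times v)=w$ reduced, so the diagonal term $(\la(w),v(w))$ arises only from $u'=w_{\la(w)}$ with coefficient $e^{w_{\la(w)}}_{\la(w)}=1$ (since $T^k(w_{\la(w)})$ is a single leaf), while every other nonzero $(\la',v')$ satisfies $|\la'|>|\la(w)|$ because a shorter right factor forces a longer left factor. The resulting transition matrix is unitriangular with respect to the filtration by $|\la|$, so the $\CS_w$ form a $\Z$-basis of $\Gamma^{(k)}[Y_{>k}]$. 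The main obstacle is the type A splitting lemma: the nilCoxeter manipulation is routine but one must carefully track the inheritance of the $k$-increasing property (to apply (\ref{AG})) and identify the shifted tail with the Schubert polynomial in $Y_{>k}$.
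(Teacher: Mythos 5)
Your derivation of the splitting formula (\ref{basic}) follows essentially the same route as the paper: a nilCoxeter factorization at position $k$. You split first at the $C/A$ boundary and then split the type~A product $A_1(y_1)\cdots A_{m-1}(y_{m-1})$ after the $A_k$ factor, whereas the paper splits $C(X)A_1(y_1)\cdots A_{n-1}(y_{n-1})$ at a single point and identifies the two halves with $\CS_u[0,k]$ and $\AS_\om[k+1,n]$; both variants hinge on (\ref{Astab}) together with the identity $\AS_{v_1}^{(k)}=G_{v_1}^{(k)}$ (equivalently Lemma~\ref{CSk}), and the reindexing you perform is exactly what recovers $J^{(k)}_{u'}$ from Definition~\ref{Stdef}. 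This part is correct and is in substance the paper's argument.

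Your proof of the $\Z$-basis claim is a genuinely different argument. The paper observes that $\partial_i$ annihilates $\Gamma^{(k)}[Y_{>k}]$ for $i<k$, then invokes the fact that $\{\CS_w : w\in W_\infty\}$ is a $\Z$-basis of $\Gamma[Y]$ (\cite[Thm.~3]{BH}) together with $\partial_i\CS_w = \CS_{ws_i}$ to conclude that any $f\in\Gamma^{(k)}[Y_{>k}]$ involves only $\CS_w$ with $w$ increasing up to $k$. You instead set up the explicit bijection $w\leftrightarrow(\la(w),v(w))$ via the parabolic factorization $w=w^P\cdot u\cdot(1_k\times v)$ (using the direct-product structure of $W_{\{k\}}$), verify that the $k$-increasing hypothesis forces $u=e$, and then show -- using (\ref{basic}), Theorem~\ref{mainthm}, and the minimality of $w_{\la(w)}$ in its coset -- that the transition matrix from $\{\CS_w\}$ to the graded basis $\{\Ti_\la\AS_v(Y_{>k})\}$ is unitriangular with respect to the filtration by $|\la^1|$. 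Both arguments are correct. Yours is more self-contained, producing a basis of $\Gamma^{(k)}[Y_{>k}]$ directly without appealing to the global basis property of $\{\CS_w\}$ over $\Gamma[Y]$; the paper's is shorter and leans on the existing divided-difference machinery. One small point: your off-diagonal inequality $|\la'|>|\la(w)|$ ultimately rests on the uniqueness and minimality of the coset representative $w_{\la(w)}$, and this should be stated explicitly -- the phrase ``a shorter right factor forces a longer left factor'' is a restatement of the conclusion rather than its justification.
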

\begin{proof}
From the definition (\ref{dbleC}) we deduce that for any $w\in W_n$,
\[
\CS_w(X\,;Y) = \sum_{u\om=w,\, \om\in S_{\infty}}\CS_u[0,k]\AS_\om[k+1,n].
\]
According to (\ref{Astab}), the polynomial $\AS_\om[k+1,n]$ is
non-zero only if $\om=1_k\times v$ for some $v \in S_\infty$, in
which case $\AS_\om[k+1,n] = \AS_v(y_{k+1},\ldots,y_n)$.
For all such $\om$, we furthermore note that the element $u=w\om^{-1}$ is
increasing up to $k$, and hence $\CS_u[0,k]=J_u[0,k]$ by Theorem
\ref{mainthm}. This proves equation (\ref{basic}).

Set $\CS_w=\CS_w(X\,;Y)$ and for each $i\geq 0$, let $\partial_i$
denote the divided difference operator from \cite[\S 2]{BH}. Recall
that $\partial_i\CS_w = \CS_{ws_i}$, if $w_i>w_{i+1}$, and
$\partial_i\CS_w = 0$, otherwise.  Since the $\Ti_\la=\CS_{w_\la}$ for
$k$-strict partitions $\la$ form a $\Z$-basis of $\Gamma^{(k)}$, we
deduce that $\partial_i f=0$ for all $i<k$ and $f$ in the ring
$\Gamma^{(k)}[Y_{>k}]$.

Equations (\ref{CStan}) and (\ref{basic}) imply that the $\CS_w$ for $w$
increasing up to $k$ are contained in $\Gamma^{(k)}[Y_{>k}]$. Moreover, the
$\CS_w$ for $w\in W_\infty$ are known to be a $\Z$-basis of
$\Gamma[Y]$ from \cite[Thm.\ 3]{BH}. Given any $f\in
\Gamma^{(k)}[Y_{>k}]$, we therefore have $f = \sum_{w\in
W_\infty}a_w\,\CS_w$ for some $a_w\in \Z$. Since $\partial_if=0$ for
all $i<k$, we deduce that only terms $\CS_w$ with $w$ increasing up to $k$
appear in the sum, completing the proof.
\end{proof}

Fix a sequence $\fraka \, :\, a_1 < \cdots < a_p$ of nonnegative
integers and $w \in W_\infty$ which is compatible with $\fraka$.
Given any sequence of partitions
$\underline{\la}=(\la^1,\ldots,\la^p)$ with $\la^1$ $a_1$-strict, we
define the nonnegative integer
\begin{equation}
\label{fdef0}
e^w_{\underline{\la}} = \sum_{u_1\cdots u_p = w}
e_{\la^1}^{u_1}c_{\la^2}^{u_2}\cdots c_{\la^p}^{u_p}, 
\end{equation}
where the sum is over reduced factorizations $u_1\cdots u_p = w$
compatible with $\fraka$ such that $u_2,\ldots,u_p\in S_\infty$, and
the integers $c_{\la^i}^{u_i}$ and $e^{u_1}_{\la^1}$ are as in
(\ref{Geq}) and (\ref{CStan}), respectively.

The number $e^w_{\underline{\la}}$ can be described in a more
picturesque way as follows. Given a permutation $\om\in S_\infty$, let
$T(\om)$ denote the Lascoux-Sch\"utzenberger transition tree
associated to $\om$ in \cite[\S 4]{LS3}. For any reduced factorization
$u_1\cdots u_p = w$ compatible with $\fraka$ such that
$u_2,\ldots,u_p\in S_\infty$, the $p$-tuple of trees
$(T^{a_1}(u_1),T(u_2),\ldots,T(u_p))$ is called a {\em grove}. The
collection of all such groves forms the {\em $\fraka$-transition
forest} associated to $w$. The integer $e^w_{\underline{\la}}$ is then
equal to the number of $p$-tuples of leaves of shape $\underline{\la}$
in the groves of the $\fraka$-transition forest associated to $w$.

Define $Y_i = \{y_{a_{i-1}+1},\ldots,y_{a_i}\}$ for each
$i\geq 1$; in particular $Y_1=\emptyset$ if $a_1=0$. 

\begin{thm}
\label{CSHG}
Suppose that $w\in W_\infty$ is compatible with the sequence
$\fraka$. Then we have
\begin{equation}
\label{CSHGsplitting}
\CS_w(X\,;Y) = \sum_{u_1\cdots u_p = w} J_{u_1}(X\,;Y_1)G_{u_2}(Y_2)
\cdots G_{u_p}(Y_p)
\end{equation}
summed over all reduced factorizations $u_1\cdots u_p = w$ compatible
with $\fraka$ such that $u_2,\ldots,u_p\in S_\infty$. Furthermore, 
we have
\begin{equation}
\label{CSsplitting}
\CS_w(X\,;Y) = \sum_{\underline{\la}} 
e^w_{\underline{\la}}\,
\Ti_{\la^1}(X\,;Y_1)s_{\la^2}(Y_2)\cdots s_{\la^p}(Y_p)
\end{equation}
summed over all sequences of partitions $\underline{\la}=(\la^1,\ldots,\la^p)$
with $\la^1$ $a_1$-strict.
\end{thm}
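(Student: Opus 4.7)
The plan is to deduce both equations in stages: first establish (\ref{CSHGsplitting}) by concatenating Proposition \ref{basis} with the type A splitting formula (\ref{SGsplitting}), then obtain (\ref{CSsplitting}) by expanding each factor via Theorem \ref{mainthm} and (\ref{Geq}).

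Since $w$ is compatible with $\fraka$ it is in particular increasing up to $a_1$, so Proposition \ref{basis} with $k=a_1$ gives
\[
\CS_w(X\,;Y) = \sum_{u(1_{a_1}\times v)=w,\ v\in S_\infty} J_u(X\,;Y_1)\,\AS_v(y_{a_1+1},y_{a_1+2},\ldots),
\]
using that $J_u^{(a_1)}(X\,;Y) = J_u(X\,;Y_1)$ directly from the definitions. A key intermediate step is to verify that $v$ is forced to be compatible with the shifted sequence $\fraka'' := (a_2 - a_1, \ldots, a_p - a_1)$. Writing $w' = 1_{a_1}\times v$, I would show that if $v$ has a descent at position $k$, then a short inversion-counting argument using the reducedness of $uw' = w$ produces a descent of $w$ at position $a_1 + k$: the inverted pair of values $(v(k+1)+a_1,\,v(k)+a_1)$ of $w'$ cannot be inverted by $u$, else $\ell(uw')<\ell(u)+\ell(w')$. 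The compatibility of $w$ with $\fraka$ then forces $a_1 + k \in \{a_2, \ldots, a_p\}$, so $v$ is compatible with $\fraka''$.

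Applying (\ref{SGsplitting}) to $v$ in the shifted variables $y'_j := y_{a_1 + j}$ yields
\[
\AS_v(y_{a_1+1}, y_{a_1+2}, \ldots) = \sum_{v'_1\cdots v'_{p-1} = v} G_{v'_1}(Y_2)\cdots G_{v'_{p-1}}(Y_p)
\]
summed over reduced factorizations compatible with $\fraka''$. Setting $u_1 := u$ and $u_i := 1_{a_1}\times v'_{i-1}$ for $i \geq 2$ establishes a bijection with reduced factorizations $u_1\cdots u_p = w$ compatible with $\fraka$ such that $u_2, \ldots, u_p \in S_\infty$. Invoking the shift invariance $G_{1_n \times \sigma} = G_\sigma$ (see the proof of Lemma \ref{multlem}) to rewrite $G_{v'_{i-1}}(Y_i) = G_{u_i}(Y_i)$ completes the derivation of (\ref{CSHGsplitting}). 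Finally, expanding $J_{u_1}(X\,;Y_1) = \sum_{\la^1} e^{u_1}_{\la^1}\,\Ti_{\la^1}(X\,;Y_1)$ via Theorem \ref{mainthm} (valid since $u_1$ is increasing up to $a_1$) and $G_{u_i}(Y_i) = \sum_{\la^i} c^{u_i}_{\la^i}\,s_{\la^i}(Y_i)$ via (\ref{Geq}), then interchanging sums and collecting coefficients according to (\ref{fdef0}) produces (\ref{CSsplitting}).

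The main obstacle is the descent lemma above: proving that in a reduced factorization $uw' = w$ every descent of $w'$ remains a descent of $w$. This is precisely what licenses the transition from Proposition \ref{basis} (which imposes no a priori constraint on the descents of $v$) to (\ref{SGsplitting}) (which requires compatibility). Once this point is settled, the rest is bookkeeping: chaining two well-understood splittings and expanding in the appropriate bases.
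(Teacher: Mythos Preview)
Your proposal is correct and follows essentially the same route as the paper: the paper's proof simply says that (\ref{CSHGsplitting}) follows from (\ref{basic}) together with (\ref{SGsplitting}), and that (\ref{CSsplitting}) then follows by expanding via (\ref{AG}) and (\ref{CStan}). You have correctly unpacked the one point the paper leaves implicit, namely that in a reduced factorization $u(1_{a_1}\times v)=w$ every descent of $1_{a_1}\times v$ persists as a descent of $w$ (a standard Coxeter-group fact), which is exactly what is needed to feed $v$ into (\ref{SGsplitting}); the rest of your bijection and the use of $G_{1_n\times\sigma}=G_\sigma$ are the routine bookkeeping that the paper omits.
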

\begin{proof}
Equation (\ref{CSHGsplitting}) follows from (\ref{basic}) and the type
A Schubert splitting formula (\ref{SGsplitting}). Moreover,
(\ref{CSsplitting}) is obtained from (\ref{CSHGsplitting}) by using
equations (\ref{Astab}), (\ref{AG}), and (\ref{CStan}).
\end{proof}

The $k=0$ case of formula (\ref{CSsplitting}) is contained in \cite[\S
5]{Yo}. It is expressed in loc.\ cit.\ using a combinatorial
interpretation of the coefficients $c^\om_\la$ in (\ref{Geq}) due to
Fomin and Greene \cite{FG}, which was also exploited in \cite[Cor.\
3]{BKTY}.

Fix a second sequence $\frakb \, :\, 0=b_1 < \cdots <b_q$ of
nonnegative integers such that $w^{-1}$ is compatible with $\frakb$. A
reduced factorization $u_1\cdots u_{p+q-1} = w$ is {\em compatible
with} $\fraka$, $\frakb$ if $u_j(i)=i$ whenever $j<q$ and $i\leq
b_{q-j}$ or whenever $j>q$ and $i \leq a_{j-q}$.  Given any sequence
of partitions $\underline{\la}=(\la^1,\ldots,\la^{p+q-1})$ with
$\la^q$ $a_1$-strict, we define
\begin{equation}
\label{dbfdef0}
f^w_{\underline{\la}} = \sum_{u_1\cdots u_{p+q-1} = w}
c_{\la^1}^{u_1}\cdots c_{\la^{q-1}}^{u_{q-1}}
e_{\la^q}^{u_q}c_{\la^{q+1}}^{u_{q+1}}\cdots c_{\la^{p+q-1}}^{u_{p+q-1}}, 
\end{equation}
where the sum is over reduced factorizations $u_1\cdots u_{p+q-1} = w$
compatible with $\fraka$, $\frakb$ such that $u_i\in S_\infty$ for all
$i\neq q$. Set $Z_j= \{z_{b_{j-1}+1},\ldots,z_{b_j}\}$ for each
$j$.

\begin{cor}
\label{dbleCSHG}
Suppose that $w$ and $w^{-1}$ are compatible with
the sequences $\fraka$ and $\frakb$, respectively. Then 
$\CS_w(X\,;Y,Z)$ is equal to 
\[
\sum_{u_1\cdots u_{p+q-1} = w} G_{u_1}(0/Z_q)\cdots
G_{u_{q-1}}(0/Z_2) J_{u_q}(X\,;Y_1) G_{u_{q+1}}(Y_2) \cdots
G_{u_{p+q-1}}(Y_p)
\]
summed over all reduced factorizations $u_1\cdots u_{p+q-1} = w$
compatible with $\fraka$, $\frakb$ such that $u_i\in S_\infty$ for
all $i\neq q$. Furthermore, we have
\begin{equation}
\label{dbCSsplitting}
\CS_w(X\,;Y,Z) = \sum_{\underline{\la}} 
f^w_{\underline{\la}}\,
s_{\la^1}(0/Z_q)\cdots
\Ti_{\la^q}(X\,;Y_1)\cdots s_{\la^{p+q-1}}(Y_p)
\end{equation}
summed over all sequences of partitions 
$\underline{\la}=(\la^1,\ldots,\la^{p+q-1})$
with $\la^q$ $a_1$-strict.
\end{cor}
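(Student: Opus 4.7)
The plan is to combine equation (\ref{dbleC2}) with Theorem \ref{CSHG}, the type A splitting (\ref{SGsplitting}), and a standard identity relating single Stanley symmetric functions at negated variables to the supersymmetric Schur expansion.

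First I apply (\ref{dbleC2}) to write
\[
\CS_w(X\,;Y,Z) = \sum_{\om v = w,\ \om\in S_\infty} \AS_{\om^{-1}}(-Z)\,\CS_v(X\,;Y).
\]
Taking inverses in each reduced factorization gives $v^{-1}\om^{-1}=w^{-1}$, so the descents of $\om^{-1}$ are contained in those of $w^{-1}$ (hence in $\frakb$), and the descents of $v$ in those of $w$ (hence in $\fraka$). Theorem \ref{CSHG} with sequence $\fraka$ thus splits each $\CS_v(X\,;Y)$ as a sum over reduced factorizations $u_q\cdots u_{p+q-1}=v$ compatible with $\fraka$, with $u_{q+1},\ldots,u_{p+q-1}\in S_\infty$. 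Likewise, (\ref{SGsplitting}) applied to $\AS_{\om^{-1}}(Y)$ with sequence $\frakb$ yields a splitting as a sum over reduced factorizations $r_1 r_2\cdots r_q=\om^{-1}$ compatible with $\frakb$; since $b_1=0$ forces $r_1=e$, the effective factorization is $r_2\cdots r_q=\om^{-1}$.

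Substituting $Y_j\mapsto -Z_j$ and reindexing via $u_j = r_{q+1-j}^{-1}$ for $j=1,\ldots,q-1$, I obtain $u_1 u_2\cdots u_{q-1}=\om$. The compatibility condition $r_j(i)=i$ for $j>1$ and $i\leq b_{j-1}$ translates exactly into $u_k(i)=i$ for $k<q$ and $i\leq b_{q-k}$, which is the condition stated in the Corollary. The identity
\[
G_{\sigma^{-1}}(-Z) = G_\sigma(0/Z),
\]
which follows from the symmetry $c_\la^{\sigma^{-1}}=c_{\la'}^\sigma$ of (\ref{Geq}) combined with $s_\la(-Z)=(-1)^{|\la|}s_\la(Z)$ and $s_\la(0/Z)=(-1)^{|\la|}s_{\la'}(Z)$, converts each $G_{r_j}(-Z_j)=G_{u_{q+1-j}^{-1}}(-Z_j)$ into $G_{u_{q+1-j}}(0/Z_j)$. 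Since these factors involve disjoint variable sets they commute, and rearranging plus combining with the $\CS_v$ splitting produces the first displayed formula of the Corollary, summed over reduced factorizations $u_1\cdots u_{p+q-1}=w$ compatible with both $\fraka$ and $\frakb$, with $u_i\in S_\infty$ for $i\neq q$.

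Finally, (\ref{dbCSsplitting}) is obtained by expanding each $G_{u_i}$ via (\ref{Geq}), applied either to Schur polynomials in $Y_i$ or to their specializations $s_{\la^j}(0/Z_{q+1-j})$ in the $Z$-variables, and expanding $J_{u_q}$ in theta polynomials via Theorem \ref{mainthm}; collecting coefficients matches the definition (\ref{dbfdef0}) of $f^w_{\underline{\la}}$. The main obstacle, though not deep, is the bookkeeping in aligning the reversed $Z$-side indexing (arising from the inversion $\om\leftrightarrow\om^{-1}$) with the forward $Y$-side indexing, and confirming that the reindexed compatibility condition on $u_1,\ldots,u_{q-1}$ reproduces the one stipulated in the Corollary.
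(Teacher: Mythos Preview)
Your proof is correct and follows essentially the same approach as the paper, which simply cites equations (\ref{dbleC2}), (\ref{SGsplitting}), and Theorem \ref{CSHG} without further detail. You have carefully filled in the bookkeeping that the paper leaves implicit, including the descent-containment argument for the factors, the reindexing on the $Z$-side, and the identity $G_{\sigma^{-1}}(-Z) = G_\sigma(0/Z)$.
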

\begin{proof}
The result follows immediately from equations (\ref{dbleC2}),
(\ref{SGsplitting}), and Theorem \ref{CSHG}.
\end{proof}

According to \cite[Thm.\ 8.1]{IMN}, the Schubert polynomials
$\CS_w(X\,;Y,Z)$ enjoy the following symmetry property:
\begin{equation}
\label{sym}
\CS_w(X\,; Y,Z) = \CS_{w^{-1}}(X\,; -Z,-Y).
\end{equation}
This also follows immediately from equation (\ref{dbleC}). By applying
Corollary \ref{dbleCSHG} to the right hand side of (\ref{sym}), we
obtain dual versions of these splitting equations.

\section{Symplectic degeneracy loci}
\label{sdl}

\subsection{Isotropic partial flag bundles}
Let $E\to B$ be a vector bundle of rank $2n$ on an algebraic variety
$B$. Assume that $E$ is a {\em symplectic} bundle, i.e.\ $E$ is
equipped with an everywhere nondegenerate skew-symmetric form
$E\otimes E\to \C$. A subbundle $V$ of $E$ is {\em isotropic} if the
form vanishes when restricted to $V$; the ranks of isotropic
subbundles of $E$ range from $0$ to $n$. Fix a sequence $\fraka \, :\,
a_1 < \cdots < a_p$ of nonnegative integers with $a_p <n$, and set
$a_{p+1}=n$ for convenience. We introduce the {\em isotropic partial
flag bundle} $\F^\fraka(E)$ with its projection map $\rho:\F^\fraka(E)
\to B$. The variety $\F^\fraka(E)$ parametrizes partial flags
\begin{equation}
\label{Edot}
E_\bull \ :\ 0= E_0 \subset E_1 \subset \cdots \subset E_p \subset E
\end{equation}
with $\rank E_i = n-a_{p+1-i}$ and $E_p$ isotropic. Here we have
identified $E$ with its pullback under the map $\rho$, and also 
let (\ref{Edot}) denote the tautological partial flag of vector bundles
over $\F^\fraka(E)$.

There is a group monomorphism $\phi:W_n\hra S_{2n}$ with image
\[
\phi(W_n)=\{\,\om\in S_{2n} \ |\ \om(i)+\om(2n+1-i) = 2n+1,
 \ \ \text{for all}  \ i\,\}.
\]
The map $\phi$ is determined by setting,
for each $w=(w_1,\ldots,w_n)\in W_n$ and $1\leq i \leq n$,
\[
\phi(w)(i)=\left\{ \begin{array}{cl}
             n+1-w_{n+1-i} & \mathrm{ if } \ w_{n+1-i} \ \mathrm{is} \
             \mathrm{unbarred}, \\
             n+\ov{w}_{n+1-i} & \mathrm{otherwise}.
             \end{array} \right.
\]
Let $W^\fraka$ be the set of signed permutations $w\in W_n$ whose
descent positions are listed among the integers $a_1,\ldots,
a_p$. These elements are the minimal length coset representatives in
$W_n/W_\fraka$, where $W_\fraka$ denotes the subgroup of $W_n$
generated by the simple reflections $s_i$ for $i\notin \{a_1,\ldots,
a_p\}$. 

Fix a flag $0=F_0\subset F_1\subset \cdots \subset F_n \subset E$ of
subbundles of $E$ with $\rank F_i = i$ for each $i$ and $F_n$
isotropic. We extend any such flag to a complete flag $F_{\bull}$ in
$E$ by letting $F_{n+i}=F_{n-i}^{\perp}$ for $1\leq i\leq n$; we call
the completed flag a {\em complete isotropic flag}.  For every $w\in
W^{\fraka}$ and complete isotropic flag $F_\bull$ of subbundles of
$E$, we define the {\em universal Schubert variety} $\X_w\subset
\F^\fraka(E)$ as the locus of $a \in \F^\fraka(E)$ such that
\begin{equation}
\label{schdef}
\dim(E_r(a)\cap F_s(a))\geq \#\,\{\,i \leq \rank 
E_r \ |\ \phi(w)(i)> 2n-s\,\} \ \, \forall \ r,s.
\end{equation}
The variety $\X_w$ is an irreducible subvariety of $\F^\fraka(E)$ of
codimension $\ell(w)$, and may be regarded as a universal degeneracy
locus. Formulas for the classes $[\X_w]$ in the cohomology or Chow
ring of $\F^\fraka(E)$ pull back to identities for corresponding loci
whenever one has a symplectic vector bundle $V$ and two flags of
isotropic subbundles of $V$, following \cite{Fu3}. Moreover, they are
equivalent to formulas which represent the Schubert classes in the
$T$-equivariant cohomology ring of isotropic partial flag varieties,
as observed e.g.\ in \cite{Gra}.

\subsection{Full flag bundles and the geometrization map}

Consider the full flag bundle $\F(E)=\F^{(0,1,2,\ldots,n-1)}(E)$
parametrizing complete isotropic flags of subbundles $E_\bull$ in $E$.
Let $\XX = (\x_1,\ldots,\x_n)$ and $\YY = (\y_1,\ldots,\y_n)$.
According to \cite[\S 3]{Fu2}, the cohomology (or Chow) ring
$\HH^*(\F(E),\Z)$ is presented as a quotient
\begin{equation}
\label{presentation}
\HH^*(\F(E)) \cong \HH^*(B)[\XX,\YY]/\J_n,
\end{equation}
where $\J_n$ denotes the ideal generated by the differences
$e_i(\x_1^2,\ldots,\x_n^2)- e_i(\y_1^2,\ldots,\y_n^2)$ for $1\leq i
\leq n$. The inverse of the isomorphism (\ref{presentation}) sends the
class of $\x_i$ to $-c_1(E_{n+1-i}/E_{n-i})$ and of $\y_i$ to
$-c_1(F_{n+1-i}/F_{n-i})$ for each $i$ with $1\leq i \leq n$.

The ring $\HH^*(\F(E))$ may be used to study the cohomology of any
isotropic partial flag bundle $\F^\fraka(E)$, because the natural
surjection $\F(E)\to \F^\fraka(E)$ induces an injective ring
homomorphism $\iota :\HH^*(\F^\fraka(E))\to\HH^*(\F(E))$. The
tautological vector bundles $E_i$, $F_j$, the universal Schubert
varieties, and their cohomology classes on $\F^\fraka(E)$ pull back
under $\iota$ to the homonymous objects over $\F(E)$.

The Schubert varieties $\X_w$ on $\F(E)$ are indexed by $w$ in the
full Weyl group $W_n$.  Furthermore, the type C double Schubert
polynomials $\CS_w(X;Y,Z)$ represent their cohomology classes $[\X_w]$
in the presentation (\ref{presentation}), but only after a certain
change of variables. Ikeda, Mihalcea, and Naruse \cite[\S 10]{IMN}
provide a different way to connect these Schubert polynomials to
geometry, which we will adapt to our current setup. For a closely
related approach (which preceded \cite{IMN}) in the case of single
Schubert polynomials, see \cite{T2}.

The key tool is the following ring homomorphism derived from
\cite{IMN}, which we call the {\em geometrization map}\,:
\[
\pi_n : \Gamma[Y,Z] \to \HH^*(B)[\XX,\YY]/\J_n.
\]
The homomorphism $\pi_n$ is determined by setting
\begin{gather*}
\pi_n(q_r(X))=\sum_{i=0}^r e_i(\XX)h_{r-i}(\YY) \ \ \text{for all} \
r, \\ \pi_n(y_i)=\begin{cases} -\x_i & \text{if $1\leq i\leq n$}, \\
\ \ 0 & \text{if $i>n$}, \end{cases}
\ \ \ \text{and} \ \ \ \pi_n(z_j)= \begin{cases}
\y_j & \text{if $1\leq j\leq n$}, \\
0 & \text{if $j>n$}. \end{cases}
\end{gather*}
It follows from \cite[\S 1]{Gra} and \cite[Prop.\ 7.7 and \S 10]{IMN}
that for $w\in W_n$, the geometrization map $\pi_n$ maps
$\CS_w(X\,;Y,Z)$ to a polynomial which represents the universal
Schubert class $[\X_w]$ in the presentation
(\ref{presentation}). Furthermore, we have $\pi_n(\CS_w)\in \J_n$ for
$w\in W_\infty\ssm W_n$.

If $V$ and $V'$ are two vector bundles with total Chern classes $c(V)$
and $c(V')$, respectively, we denote $s_\la(c(V)-c(V'))$ by
$s_\la(V-V')$. We similarly denote the class $\Ti_\la(c(V)-c(V'))$ by
$\Ti_\la(V-V')$.  To state our main geometric result, let $Q_1 =
E/E_p, Q_2= E_p/E_{p-1},\ldots, Q_p =E_2/E_1$. Consider a sequence
$\frakb \, :\, 0=b_1 < \cdots <b_q$ with $b_q <n$, and set
$\wh{Q}_2 = F_n/F_{n-b_2}, \ldots, \wh{Q}_q=
F_{n-b_{q-1}}/F_{n-b_q}$.

\begin{thm}
\label{dbleCloci}
Suppose that $w\in W^{\fraka}$ and that $w^{-1}$ is compatible with 
$\frakb$. Then we have
\begin{align*}
[\X_w] &= \sum_{\underline{\la}} f^w_{\underline{\la}}\,
s_{(\la^1)'}(\wh{Q}_q)\cdots s_{(\la^{q-1})'}(\wh{Q}_2)
\Ti_{\la^q}(Q_1-F_n) s_{\la^{q+1}}(Q_2)\cdots s_{\la^{p+q-1}}(Q_p) \\
&= \sum_{\underline{\la}} f^w_{\underline{\la}}\,
s_{\la^1}(F_{n+b_{q-1}} - F_{n+b_q})\cdots
\Ti_{\la^q}(E-E_p-F_n)
\cdots s_{\la^{p+q-1}}(E_2-E_1)
\end{align*}
in $\HH^*(\F^\fraka(E))$, where the sum is over all sequences of
partitions $\underline{\la}=(\la^1,\ldots,\la^{p+q-1})$ with $\la^q$
$a_1$-strict, and the coefficients $f^w_{\underline{\la}}$ are given
by {\em (\ref{dbfdef0})}.
\end{thm}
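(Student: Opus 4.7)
The plan is to apply the geometrization map $\pi_n$ to the splitting identity (\ref{dbCSsplitting}) from Corollary \ref{dbleCSHG} and then descend the resulting identity from $\HH^*(\F(E))$ to $\HH^*(\F^\fraka(E))$ via the injective pullback $\iota$. The left-hand side $\pi_n(\CS_w(X\,;Y,Z))$ represents $[\X_w]$ in the full flag bundle by the property of $\pi_n$ recalled in the subsection on full flag bundles, so the work lies entirely in identifying the image under $\pi_n$ of each factor of (\ref{dbCSsplitting}) with the claimed Chern class expression.

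First I would handle the type A Schur factors. Using $\pi_n(y_j) = -\x_j$ and the identification $\x_j = -c_1(E_{n+1-j}/E_{n-j})$, the Chern roots of $Q_i = E_{p+2-i}/E_{p+1-i}$ are exactly $\{-\x_{a_{i-1}+1},\ldots,-\x_{a_i}\}$, giving $\pi_n(s_{\la^{q+i-1}}(Y_i)) = s_{\la^{q+i-1}}(Q_i)$ for $i \geq 2$. Similarly, for the $Z$-factors, using $s_\la(0/Z)=(-1)^{|\la|}s_{\la'}(Z)$ together with $\pi_n(z_i)=\y_i$ and the Chern root description of $\wh{Q}_{q+1-j}=F_{n-b_{q-j}}/F_{n-b_{q+1-j}}$ (whose roots are $\{-\y_{b_{q-j}+1},\ldots,-\y_{b_{q+1-j}}\}$), one obtains $\pi_n(s_{\la^j}(0/Z_{q+1-j}))=s_{(\la^j)'}(\wh{Q}_{q+1-j})$ by a direct sign comparison.

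The most delicate step is matching the theta factor. I would compare generating functions: $\sum_r \ti_r(X\,;Y_1)t^r = \prod_i\tfrac{1+x_it}{1-x_it}\prod_{j=1}^{a_1}(1+y_jt)$ maps under $\pi_n$, via $\pi_n(q_r(X))=\sum_i e_i(\XX)h_{r-i}(\YY)$ and $\pi_n(y_j)=-\x_j$, to $\prod_{j=1}^{a_1}(1-\x_jt)\prod_{i=1}^n(1+\x_it)\prod_{j=1}^n(1-\y_jt)^{-1}$. On the geometric side, the symplectic relation $E_{n+i}=E_{n-i}^\perp$ forces the Chern roots of $Q_1=E/E_p$ to decompose as $\{-\x_1,\ldots,-\x_{a_1}\}\cup\{\x_1,\ldots,\x_n\}$ (the isotropic and coisotropic contributions), so $c(Q_1,t)$ equals the numerator above, while $c(F_n,t)=\prod(1-\y_jt)$ equals the denominator. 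Therefore $\pi_n(\ti_r(X\,;Y_1))=g_r(Q_1-F_n)$, and applying the common raising operator $R^{\la^q}$ yields $\pi_n(\Ti_{\la^q}(X\,;Y_1))=\Ti_{\la^q}(Q_1-F_n)$.

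Assembling these identifications gives the first displayed form of $[\X_w]$ as an identity in $\HH^*(\F(E))$. All Chern classes on the right-hand side are pullbacks from $\HH^*(\F^\fraka(E))$ (the bundles $Q_1,\ldots,Q_p$, $F_n$, and $\wh{Q}_2,\ldots,\wh{Q}_q$ are already defined there), and $\X_w$ is itself defined on $\F^\fraka(E)$, so by injectivity of $\iota$ the identity descends to the partial flag bundle. For the second displayed form, I would use the $K$-theoretic identities $Q_1=E-E_p$, $Q_i=E_{p+2-i}-E_{p+1-i}$, together with $F_{n+b_q}/F_{n+b_{q-1}}\cong \wh{Q}_q^\vee$ coming from the symplectic pairing; then the identity $s_\la(V-V')=(-1)^{|\la|}s_{\la'}(V'/V)$ for $V\subset V'$, combined with $s_\mu$ being homogeneous of degree $|\mu|$ in Chern roots (so $s_\mu(V^\vee)=(-1)^{|\mu|}s_\mu(V)$ in finite variables), translates $s_{\la^1}(F_{n+b_{q-1}}-F_{n+b_q})$ into $s_{(\la^1)'}(\wh{Q}_q)$, matching the first form. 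The main obstacle will be the theta-polynomial step, where one must be careful about the decomposition of $Q_1$ coming from the symplectic form and the fact that the paper's $\Ti_\la(c-d)$ and $\Ti_\la(X\,;Y)$ are defined via different generating functions that only agree after the substitution provided by $\pi_n$.
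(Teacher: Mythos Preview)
Your proposal is correct and follows the same route as the paper: apply the geometrization map $\pi_n$ to the splitting formula (\ref{dbCSsplitting}) and identify the image of each factor with the corresponding Chern-class expression via the Chern-root descriptions of $Q_1,\ldots,Q_p$, $F_n$, and $\wh{Q}_2,\ldots,\wh{Q}_q$. The paper's proof is terser---it simply records the Chern roots and the computation of $\pi_n(\ti_r(X;Y_1))=c_r(Q_1-F_n)$, $\pi_n(s_\mu(Y_r))=s_\mu(Q_r)$, and $\pi_n(s_\mu(0/Z_r))=s_{\mu'}(\wh{Q}_r)=s_\mu(F_{n+b_{r-1}}-F_{n+b_r})$---leaving the descent from $\F(E)$ to $\F^\fraka(E)$ and the passage between the two displayed forms implicit; your extra detail on these points (the injectivity of $\iota$ and the symplectic duality $F_{n+b_q}/F_{n+b_{q-1}}\cong\wh{Q}_q^\vee$) fills those in correctly.
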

\begin{proof}
The variables $\x_i$ for $1\leq i \leq n$ give the Chern roots of the
various vector bundles over $\F^\fraka(E)$. In particular the Chern
roots of $Q_1$ are $\x_1,\ldots,\x_n,-\x_1,\ldots,-\x_{a_1}$, while
those of $Q_r$ for $r\geq 2$ are
$-\x_{a_{r-1}+1},\ldots,-\x_{a_r}$. Similarly the Chern roots of
$F_{n+1-r}$ are represented by $-\y_r,\ldots,-\y_n$ for each $r$. With
$k=a_1$ we have $\ti_r(X\,;Y_1) = \sum_{i=0}^r
q_{r-i}(X)e_i(y_1,\ldots,y_{a_1})$ for any $r\geq 0$. Therefore, we
obtain
\begin{align*}
\pi_n(\ti_r(X\,;Y_1)) &= \sum_{i,j\geq 0} e_{r-i-j}(\XX)
h_j(\YY) e_i(-\x_1,\ldots,-\x_{a_1}) \\ 
&= \sum_{j\geq 0} e_{r-j}(\x_1,\ldots,\x_n, -\x_1,\ldots,-\x_{a_1}) 
h_j(\YY)= c_r(Q_1-F_n)
\end{align*}
and hence 
\[
\pi_n(\Ti_\la(X\,;Y_1)) = \Ti_\la(Q_1-F_n)
\]
for any $a_1$-strict partition $\la$. Moreover, for any partition
$\mu$ and $r\geq 2$, we have
\[
\pi_n(s_\mu(Y_r)) = s_\mu(-\x_{a_{r-1}+1},\ldots,-\x_{a_r}) = s_\mu(Q_r),
\] 
while
\[
\pi_n(s_\mu(0/Z_r)) = s_{\mu'}(-\y_{b_{r-1}+1},\ldots,-\y_{b_r}) =
s_{\mu'}(\wh{Q}_r) = s_\mu(F_{n+b_{r-1}}-F_{n+b_r}).
\]
We deduce that $\pi_n$ maps formula
(\ref{dbCSsplitting}) onto the desired equality.
\end{proof}

\section{Giambelli formulas for symplectic flag varieties}
\label{flags}

\subsection{Partial isotropic flag varieties}
Equip the vector space $E=\C^{2n}$ with a nondegenerate skew-symmetric
bilinear form. Fix a sequence $\fraka \, :\, a_1 < \cdots < a_p$ of
nonnegative integers with $a_p <n$, and set $a_{p+1}=n$. Let
$\X(\fraka)$ be the variety parametrizing partial flags of subspaces
\[
0= E_0 \subset E_1 \subset \cdots \subset E_p \subset E
\]
with $\dim E_i = n-a_{p+1-i}$ and $E_p$ isotropic.
The same notation will be used to denote the tautological partial 
flag $E_\bull$ of vector bundles over $\X(\fraka)$.

A presentation of the cohomology ring of $\X(\fraka)$ as a quotient of
the symmetric algebra on the characters of a maximal torus in
$\Sp_{2n}$ is well known \cite{Bo}. We will give here an alternative
presentation using the Chern classes of the tautological vector
bundles over $\X(\fraka)$. Let $Q_1 = E/E_p, Q_2= E_p/E_{p-1},\ldots,
Q_{p+1} =E_1$ and set $\sigma_i = c_i(Q_1)$ for $1\leq i \leq n+a_1$
and $c_j^r =c_j(Q_r)$ for $2\leq r\leq p+1$ and $1\leq j\leq
a_r-a_{r-1}$.  We then have the following result

\begin{prop}
\label{pres}
The cohomology ring $\HH^*(\X(\fraka),\Z)$ is presented as a quotient of the 
polynomial ring $\Z[\s_1,\ldots,\s_{n+a_1},c_1^2,\ldots,c_{a_2-a_1}^2,
\ldots,c_1^{p+1},\ldots,c_{n-a_p}^{p+1}]$ modulo the relations
\[
\det(\sigma_{1+j-i})_{1\leq i,j \leq r} =
(-1)^r\sum_{i_2+\cdots+i_{p+1}=r} c_{i_2}^2\cdots c_{i_{p+1}}^{p+1}\,,
\ \ \ \ 1\leq r \leq n+a_1
\]
and
\begin{equation}
\label{R2}
\sigma_r^2 + 2\sum_{i=1}^{n+a_1-r}(-1)^i \sigma_{r+i}\sigma_{r-i}= 0\,,
 \ \  \ \ a_1+1\leq r \leq n.
\end{equation}
\end{prop}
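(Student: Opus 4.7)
The plan is to verify directly that the two families of relations hold in $\HH^*(\X(\fraka),\Z)$ via standard Chern class computations, and then to establish completeness by comparing with the cohomologies of the base and fiber of the natural projection $\pi \colon \X(\fraka) \to \IG(n-a_1,2n)$ sending $E_\bull$ to $E_p$.

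For the first family, the tautological filtration of the trivial bundle $E$ yields $c(Q_1) c(Q_2) \cdots c(Q_{p+1}) = c(E) = 1$, hence $c(E_p) = c(Q_1)^{-1}$. The standard Jacobi-Trudi identity for the inverse of a power series shows that the coefficient of $t^r$ in $c(Q_1)^{-1}$ equals $(-1)^r \det(\sigma_{1+j-i})_{1 \leq i, j \leq r}$. Since $c_r(E_p) = \sum_{i_2+\cdots+i_{p+1}=r} c_{i_2}^2 \cdots c_{i_{p+1}}^{p+1}$ by Whitney summation along the filtration of $E_p$, the first family of relations follows.

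For the second family, the isotropy of $E_p$ together with the symplectic isomorphism $E \cong E^\vee$ identifies $E_p^\perp$ with $Q_1^\vee$ as a subbundle of $E$, so the symplectic subquotient $E_p^\perp/E_p$ of rank $2a_1$ satisfies
\[
c(E_p^\perp/E_p)(t) = c(Q_1^\vee)(t) \cdot c(E_p)(t)^{-1} = c(Q_1)(t) \cdot c(Q_1)(-t).
\]
A direct expansion shows that the coefficient of $t^{2r}$ on the right-hand side equals $(-1)^r$ times the left-hand side of (\ref{R2}). Since $E_p^\perp/E_p$ has rank $2a_1$, these coefficients must vanish for $r > a_1$, yielding (\ref{R2}) in the full range $a_1 < r \leq n+a_1$; the stated restriction to $a_1 < r \leq n$ suffices, since the remaining cases $n < r \leq n+a_1$ are forced by $c_j(E_p) = 0$ for $j > n-a_1$, which is already a consequence of the first family.

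To establish completeness, I would apply the Leray-Hirsch theorem to $\pi$, which is a Zariski-locally trivial fibration with fiber the type A partial flag variety $\F(n-a_p,\ldots,n-a_2;E_p)$. The cohomology of $\IG(n-a_1,2n)$ admits the classical Borel-type presentation \cite{Bo} in which $\sigma_1,\ldots,\sigma_{n+a_1}$ generate modulo exactly the symplectic relations (\ref{R2}); on the fiber, the standard type A presentation exhibits the $c_j^r$ for $r \geq 2$ as generators subject only to $\prod_{r \geq 2} c(Q_r) = c(Q_1)^{-1}$, which is precisely the first family. The main obstacle is verifying that no unexpected relations arise in combining the two presentations — equivalently, that the quotient ring defined by both families is a free $\Z$-module of rank $|W^\fraka|$, the number of Schubert cells of $\X(\fraka)$. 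I would resolve this either by a Hilbert series comparison with the known Poincar\'e polynomial of $\X(\fraka)$, or by exhibiting $|W^\fraka|$ explicit free module generators coming from Schubert polynomial representatives compatible with $\pi$, via the splittings established in \S\ref{splitC}.
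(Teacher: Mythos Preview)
Your direct Chern class verification that both families of relations hold in $\HH^*(\X(\fraka))$ is correct and more explicit than what the paper does. The overall strategy for completeness --- using the projection $\pi$ to $\IG(n-a_1,2n)$ --- is also the same as the paper's.

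There is, however, a genuine error in your completeness argument. Your claim that $\HH^*(\IG(n-a_1,2n))$ is generated by $\sigma_1,\ldots,\sigma_{n+a_1}$ modulo \emph{only} the quadratic relations (\ref{R2}) is false whenever $a_1>0$: that quotient has $n+a_1$ generators and only $n-a_1$ relations, hence infinite $\Z$-rank. The missing relations are exactly $\det(\sigma_{1+j-i})_{1\le i,j\le r}=0$ for $n-a_1<r\le n+a_1$, which encode $c_r(E_p)=0$ for $r>\rank E_p$. These are in fact present in your first family (the right-hand side is an empty sum for such $r$), but you misassigned them to the ``fiber'' part rather than the ``base'' part. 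Correspondingly, the genuine fiber relations are only the Whitney identities in degrees $1\le r\le n-a_1$, not the entire first family. The reference to \cite{Bo} is also off; Borel's presentation is for $G/B$, and the Grassmannian presentation you need is the one in \cite[Thm.~1.2]{BKT1}.

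Once you repair this split --- base relations are (\ref{R2}) together with the determinantal vanishings for $r>n-a_1$, and the fiber contributes the Whitney relations $\sum c^2_{i_2}\cdots c^{p+1}_{i_{p+1}}=c_r(E_p)$ for $1\le r\le n-a_1$ --- the two pieces combine exactly into the stated presentation, and your Leray--Hirsch step goes through. This is precisely the paper's argument, which invokes \cite{BKT1} for $\HH^*(\IG)$ and Grothendieck \cite{Gr} for the flag bundle over it, so no Hilbert-series computation is needed.
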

\begin{proof}
Let $\IG=\IG(n-a_1,2n)$ be the Grassmannian parametrizing isotropic
subspaces of $E$ of dimension $n-a_1$. According to \cite[Thm.\
1.2]{BKT1}, the cohomology ring of $\IG$ is isomorphic to the
polynomial ring generated by the Chern classes $\sigma_i=c_i(Q_1)$ and
the Chern classes of $E_p$, modulo the relations
\[
\det(\sigma_{1+j-i})_{1\leq i,j \leq r} = (-1)^r c_r(E_p)\,, \ \ \ \
1\leq r \leq n+a_1
\]
coming from the Whitney sum formula $c(E_p)c(Q_1)=1$, as well as the
relations (\ref{R2}).  The map $\X(\fraka)\to \IG$ sending $E_\bull$
to $E_p$ realizes $\X(\fraka)$ as a fiber bundle over $\IG$ with fiber
a partial $\SL_{n-a_1}$ flag variety. We deduce using e.g.\
\cite[Thm.\ 1]{Gr} that $\HH^*(\X(\fraka))$ is isomorphic to the
polynomial ring
$\HH^*(\IG)[c_1^2,\ldots,c_{a_2-a_1}^2,\ldots,c_1^{p+1},
\ldots,c_{n-a_p}^{p+1}]$ modulo the relations
\[
\sum_{i_2+\cdots+i_{p+1}=r} c_{i_2}^2\cdots c_{i_{p+1}}^{p+1} =
c_r(E_p) \,, \ \ \ \ 1\leq r \leq n-a_1.
\] 
The proposition follows by combining these two facts.
\end{proof}

\subsection{Giambelli formulas}

Our choice of the special Schubert classes on symplectic and
orthogonal Grassmannians agrees with the conventions in \cite{BKT1}.
Let $F_\bull$ be a fixed complete isotropic flag of subspaces in
$\C^{2n}$.  For each $w\in W^\fraka$, the Schubert variety
$\X_w(F_\bull)$ in $\X(\fraka)$ is defined by the same equation
(\ref{schdef}) as before. Let $E'_j = E/E_{p+1-j}$ for $1\leq j\leq
p$. The Chern classes $c_i(E'_j)$ for all $i, j$ are the Schubert
classes on $\X(\fraka)$ which are pullbacks of special Schubert
classes on symplectic Grassmannians. By definition, they are the {\em
special Schubert classes} on $\X(\fraka)$, and they generate the
cohomology ring $\HH^*(\X(\fraka))$ by Proposition \ref{pres}.
Specializing Theorem \ref{dbleCloci} to the case when the base $B$ is
a point gives the next result.

\begin{cor}
For every $w\in W^{\fraka}$, we have 
\begin{align*}
\label{giambelli1}
[\X_w] & = \sum_{\underline{\la}} 
e^w_{\underline{\la}}\,
\Ti_{\la^1}(Q_1)s_{\la^2}(Q_2)\cdots s_{\la^p}(Q_p) \\
&= \sum_{\underline{\la}} 
e^w_{\underline{\la}}\,
\Ti_{\la^1}(E'_1)s_{\la^2}(E'_2-E'_1)\cdots s_{\la^p}(E'_p-E'_{p-1})
\end{align*}
in $\HH^*(\X(\fraka),\Z)$, where the sums are over all 
sequences of
partitions $\underline{\la}=(\la^1,\ldots,\la^p)$ with $\la^1$
$a_1$-strict, and the coefficients $e^w_{\underline{\la}}$ are given
by {\em (\ref{fdef0})}.
\end{cor}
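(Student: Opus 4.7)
The plan is to specialize Theorem~\ref{dbleCloci} to the case where the base $B$ is a point, substituting the single Schubert polynomial $\CS_w(X\,;Y)$ in place of its double version. This avoids imposing any compatibility hypothesis on $w^{-1}$ that is not already present, and lets us work entirely with the single-variable splitting formula of Theorem~\ref{CSHG}.

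First, since $w\in W^\fraka$ means every descent position of $w$ lies in $\{a_1,\ldots,a_p\}$, $w$ is compatible with $\fraka$, and Theorem~\ref{CSHG} (equation (\ref{CSsplitting})) yields
\[
\CS_w(X\,;Y) = \sum_{\underline{\la}} e^w_{\underline{\la}}\, \Ti_{\la^1}(X\,;Y_1)\, s_{\la^2}(Y_2)\cdots s_{\la^p}(Y_p),
\]
where the sum is over sequences $\underline{\la}=(\la^1,\ldots,\la^p)$ with $\la^1$ $a_1$-strict and $e^w_{\underline{\la}}$ given by (\ref{fdef0}). I would then push this identity through the geometrization map $\pi_n$ of \S\ref{sdl}. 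Exactly as in the proof of Theorem~\ref{dbleCloci}, one computes
\[
\pi_n(\ti_r(X\,;Y_1)) = c_r(Q_1 - F_n), \qquad \pi_n(s_\mu(Y_r)) = s_\mu(Q_r) \text{ for } r\geq 2.
\]
When $B$ is a point, $F_n$ is a trivial subbundle of the trivial bundle $E$, so $c(F_n)=1$ and $\pi_n(\Ti_{\la^1}(X\,;Y_1))$ collapses to $\Ti_{\la^1}(Q_1)$.

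Since $\pi_n(\CS_w(X\,;Y))$ represents the class $[\X_w]$ in $\HH^*(\F(E))$, and the pullback $\iota\colon \HH^*(\X(\fraka))\hookrightarrow \HH^*(\F(E))$ is injective, applying $\pi_n$ to the displayed splitting formula produces the first identity in the corollary. The second identity follows from the equality $E'_1 = E/E_p = Q_1$ together with the short exact sequences
\[
0 \to Q_j \to E'_j \to E'_{j-1} \to 0
\]
arising from the inclusions $E_{p+1-j}\subset E_{p+2-j}$ for $2\leq j\leq p$, which give $s_{\la^j}(E'_j - E'_{j-1}) = s_{\la^j}(Q_j)$ by the Whitney sum formula. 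There is essentially no combinatorial obstacle: the hard work has already been absorbed into Theorem~\ref{CSHG} and the definition (\ref{fdef0}) of the coefficients $e^w_{\underline{\la}}$, so the only step that requires care is matching the algebraic symbols under $\pi_n$ with the geometric Chern-class notation on $\X(\fraka)$.
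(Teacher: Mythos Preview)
Your proposal is correct and follows essentially the same approach as the paper, which simply states that the corollary is obtained by specializing Theorem~\ref{dbleCloci} to the case where the base $B$ is a point. You have merely unpacked this one-line proof: rather than formally specializing the double formula (\ref{dbCSsplitting}) with a suitable $\frakb$ and observing that all the $\wh{Q}_j$ terms vanish, you start from the single splitting formula (\ref{CSsplitting}) and push it through $\pi_n$, which is an equivalent way to execute the same specialization.
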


\section{Splitting orthogonal Schubert polynomials}
\label{ogps}

In this final section we discuss the form of our splitting results for
the orthogonal groups; we will work throughout with coefficients in
the ring $\Z[\frac{1}{2}]$. For $w\in W_\infty$, let $s(w)$ denote the
number of $i$ such that $w(i)<0$. It follows e.g.\ from \cite{BH, IMN}
that the polynomials $\BS_w=2^{-s(w)}\CS_w$ represent the Schubert
classes in the (equivariant) cohomology ring of odd orthogonal flag
varieties.  Therefore the solutions to the Schubert polynomial
splitting and Giambelli problems for types B and C are essentially the
same. We will describe the splitting theorems for the even orthogonal
groups below; the story is entirely analogous to the symplectic case.

The elements of the Weyl group $\wt{W}_n$ for the root system
$\text{D}_n$ may be represented by signed permutations, as in e.g.\
\cite{B, KT}. The group $\wt{W}_n$ is an extension of $S_n$ by an
element $s_0$ which acts on the right by
\[
(u_1,u_2,\ldots,u_n)s_0=(\ov{u}_2,\ov{u}_1,u_3,\ldots,u_n).
\]
Let $\wt{W}_\infty = \cup_n \wt{W}_n$ and
$\N=\{0, 1, 2, \ldots\}$. A {\em reduced word} of
$w\in\wt{W}_\infty$ is a sequence $a_1\cdots a_\ell$ of elements in
$\N$ such that $w=s_{a_1}\cdots s_{a_\ell}$ and $\ell=\ell(w)$.  We
say that $w$ has a {\em descent} at position $r\geq 0$ if
$\ell(ws_r)<\ell(w)$, where $s_r$ is the simple reflection indexed by
$r$.  
If $k\geq 2$, we say that an element $w\in \wt{W}_\infty$ is {\em
increasing up to $k$} if it has no descents less than $k$; this means
that $|w_1|< w_2<\cdots < w_k$. We also agree that every element of
$\wt{W}_\infty$ is both increasing up to $0$ and increasing up to $1$.

For $k\in \N\ssm\{1\}$, an element $w\in\wt{W}_\infty$ is
$k$-Grassmannian if $\ell(ws_i)=\ell(w)+1$ for all $i\neq k$. We say
that $w$ is $1$-Grassmannian if $\ell(ws_i)=\ell(w)+1$ for all $i\geq
2$. A {\em typed $k$-strict partition} is a pair consisting of a
$k$-strict partition $\la$ together with an integer in $\{0,1,2\}$
called the {\em type} of $\la$, and denoted $\type(\la)$, such that
$\type(\la)>0$ if and only if $\la_i=k$ for some $i\geq 1$. The
geometric significance of the type of $\la$ is explained in 
\cite[\S 4.5]{BKT1}.

Given a $k$-Grassmannian element $w\in \wt{W}_n$, there exist unique
strict partitions $u,\zeta,v$ of lengths $k$, $r$, and $n-k-r$,
respectively, so that
\[
w=(\wh{u}_k,\ldots,u_1,
\ov{\zeta}_1,\ldots,\ov{\zeta}_r,v_{n-k-r},\ldots,v_1)
\]
where $\wh{u}_k$ is equal to $u_k$ or $\ov{u}_k$, according to the
parity of $r$. If 
\[
\mu_i=u_i+i-k-1+\#\{j\ |\ \zeta_j > u_i\},
\]
then $w$ corresponds to a typed $k$-strict partition $\la$ such that
the lengths of the first $k$ columns of $\la$ are given by
$\mu_1,\ldots,\mu_k$. The part of $\la$ in columns $k+1$ and higher is
given by $(\zeta_1-1,\ldots,\zeta_r-1)$; here it is possible that
$\zeta_r=1$, so that the sequence ends with a zero. Finally, if
$\type(\la)>0$, then $\wh{u}_k$ is unbarred if and only if
$\type(\la)=1$.  This defines a bijection between the $k$-Grassmannian
elements of $\wt{W}_\infty$ and the set of all typed $k$-strict
partitions. We let $w_\la$ denote the element of $\wt{W}_\infty$
associated to the typed $k$-strict partition $\la$.

Following \cite{L1}, we will use the nilCoxeter algebra $\wt{\cW}_n$
of $\wt{W}_n$ to define type D Stanley symmetric functions.
$\wt{\cW}_n$ is the free associative algebra with unity generated by
the elements $u_0,u_1,\ldots,u_{n-1}$ modulo the relations
\[
\begin{array}{rclr}
u_i^2 & = & 0 & i\geq 0\ ; \\
u_0 u_1 & = & u_1 u_0 \\
u_0 u_2 u_0 & = & u_2 u_0 u_2 \\
u_iu_{i+1}u_i & = & u_{i+1}u_iu_{i+1} & i>0\ ; \\
u_iu_j & = & u_ju_i & j> i+1, \ \text{and} \ (i,j) \neq (0,2).
\end{array}
\]
For any $w\in \wt{W}_n$, choose a reduced word $a_1\cdots a_\ell$ for $w$
and define $u_w = u_{a_1}\ldots u_{a_\ell}$.  We denote the
coefficient of $u_w\in \wt{\cW}_n$ in the expansion of the element
$f\in \wt{\cW}_n$ by $\langle f,w\rangle$.  Let $t$ be an
indeterminate and, following \cite[4.4]{L1}, define
\begin{gather*}
D(t) = (1+t u_{n-1})\cdots (1+t u_2)(1+t u_1)(1+t u_0)
(1+t u_2)\cdots (1+t u_{n-1}).
\end{gather*}
According to \cite[Lemma 4.24]{L1}, we have $D(s)D(t) = D(t)D(s)$ for 
any commuting variables $s$, $t$. If $D(X)=D(x_1)D(x_2)\cdots$,
then the functions $E_w(X)$ defined by
\[
E_w(X) = \langle D(X), w \rangle 
\]
are the type D Stanley symmetric functions, in agreement with \cite[\S
3]{BH}.

Next, define
\[
\DS_w(X\,;Y,Z) = \left\langle 
\tilde{A}_{n-1}(z_{n-1})\cdots \tilde{A}_1(z_1) D(X) A_1(y_1)\cdots 
A_{n-1}(y_{n-1}), w\right\rangle.
\]
The polynomials $\DS_w(X\,;Y):=\DS_w(X\,;Y,0)$ are the type D
Billey-Haiman Schubert polynomials, and the $\DS_w(X\,;Y,Z)$ are their
double versions studied in \cite{IMN}.
If $w=w_\la$ is $k$-Grassmannian, then $\DS_{w_\la}(X\,;Y)$ is equal
to an {\em eta polynomial} $H_\la(X\,;Y)$; the $H_\la$ are defined
using raising operator expansions analogous to (\ref{Tidef}) in
\cite{BKT4}. When $k=0$, we have that $\la$ is a strict partition and
$H_\la(X\,;Y)=E_{w_\la}(X)=P_\la(X)$ is a Schur $P$-function.

\begin{defn}
Given $w\in \wt{W}_n$, the {\em type D mixed Stanley function}
$I_w(X\,;Y)$ is defined by the equation
\[
I_w(X\,;Y) = 
\langle D(X)A(Y),w\rangle = 
\sum_{uv=w}E_u(X)G_v(Y)
\]
summed over all reduced factorizations $uv=w$ with $v\in S_n$.
\end{defn}

One checks that if $w$ is increasing up to $k$, then
$I^{(k)}_w=\DS_w^{(k)}(X\,;Y)$; in particular, if $w=w_\la$ is
$k$-Grassmannian, then $I^{(k)}_{w_\la} = H_\la$. Furthermore, it
follows from \cite[Thms.\ 4, 5]{B} and the type D analogue of Lemma
\ref{Tlemma} that the $I_w$ for $w$ increasing up to $k$ satisfy the
transition equations
\[
I_w^{(k)} = \sum_{{1 \leq i < r} \atop {\ell(wt_{rs}t_{ir}) = 
\ell(w)}} I_{wt_{rs}t_{ir}}^{(k)} + 
\sum_{{i \neq r} \atop {\ell(wt_{rs}\ov{t}_{ir}) = 
\ell(w)}} I_{wt_{rs}\ov{t}_{ir}}^{(k)}
\]
where $r>k$ is the last positive descent of $w$ and $s$ is maximal such
that $w_s < w_r$. 

For any $w\in \wt{W}_\infty$ which is increasing up to $k$, we
construct the $k$-transition tree $\wt{T}^k(w)$ with nodes given by
elements of $\wt{W}_\infty$ and root $w$ as in \S \ref{tes}. Let $r$
be the last descent of $w$. If $w=1$, or $k\neq 1$ and $r=k$, or $k=1$
and $r\in \{0,1\}$, then set $\wt{T}^k(w)=\{w\}$. Otherwise, let $s
= \max(i>r\ |\ w_i < w_r)$ and $\wt{\Phi}(w)= \wt{\Phi}_1(w)\cup
\wt{\Phi}_2(w)$, where
\begin{gather*}
\wt{\Phi}_1(w)= \{wt_{rs}t_{ir}\ |\ 1\leq i < r \ \ \mathrm{and} \ \ 
\ell(wt_{rs}t_{ir}) = \ell(w) \}, \\
\wt{\Phi}_2(w)= 
\{wt_{rs}\ov{t}_{ir}\ |\ i\neq r \ \ \mathrm{and} \ \
\ell(wt_{rs}\ov{t}_{ir}) = \ell(w) \}.
\end{gather*}
To define $\wt{T}^k(w)$, we join $w$ by an edge to each $v\in
\wt{\Phi}(w)$, and attach to each $v\in \wt{\Phi}(w)$ its tree
$\wt{T}^k(v)$.

The assertions of Lemma \ref{Tlemma} remain true for $\wt{T}^k(w)$,
with similar proof. When $k=0$, this is contained in \cite[Thm.\
4]{B}. For the case when $r>k>1$, $\wt{\Phi}_1(w)=\emptyset$, and
$w_s>0$, one observes that $wt_{rs}\ov{t}_{1r}\in \wt{\Phi}_2(w)$. We
deduce that for any $w\in \wt{W}_\infty$ which is increasing up to $k$,

\begin{equation}
\label{DStan}
\DS_w^{(k)} = I_w^{(k)} = \sum_{\la\, :\, |\la| = \ell(w)}
d^w_{\la}\,H_{\la}
\end{equation}
where the sum is over typed $k$-strict partitions $\la$ and $d^w_\la$
denotes the number of leaves of $\wt{T}^k(w)$ of shape $\la$. Moreover, 
for any such $w$, we have
\begin{equation}
\label{basicD}
\DS_w(X\,;Y) = \sum_{u(1_k\times v) = w}
I^{(k)}_u(X\,;Y)\AS_v(Y_{>k})
\end{equation}
where the sum is over all reduced factorizations $u(1_k\times v) = w$ with
$v\in S_\infty$.

Using equations (\ref{DStan}) and (\ref{basicD}), we obtain splitting
theorems for the single and double type D Schubert polynomials
$\DS_w$, as in \S \ref{splitC}. Fix two sequences $\fraka \, :\, a_1 <
\cdots < a_p$ and $\frakb \, :\, 0=b_1 < \cdots <b_q$ of nonnegative
integers and set $Y_i = \{y_{a_{i-1}+1},\ldots,y_{a_i}\}$ and $Z_j=
\{z_{b_{j-1}+1},\ldots,z_{b_j}\}$ for each $i,j$. We say that an element
$w\in \wt{W}_\infty$ is {\em compatible} with $\fraka$ if all
descent positions of $w$ are listed among $0, a_1,\ldots, a_p$, if
$a_1=1$, or contained in $\fraka$, otherwise.

\begin{thm}
\label{DSHG}
Suppose that $w\in \wt{W}_\infty$ is compatible with the sequence
$\fraka$. Then we have
\[
\DS_w(X\,;Y) = \sum_{u_1\cdots u_p = w} I_{u_1}(X\,;Y_1)G_{u_2}(Y_2)
\cdots G_{u_p}(Y_p)
\]
summed over all reduced factorizations $u_1\cdots u_p = w$ compatible
with $\fraka$ such that $u_2,\ldots,u_p\in S_\infty$.
\end{thm}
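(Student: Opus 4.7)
The strategy is to follow the proof of Theorem \ref{CSHG} essentially verbatim, with equation (\ref{basicD}) replacing equation (\ref{basic}) and the type A Schubert splitting formula (\ref{SGsplitting}) as the second ingredient. The preparatory work needed to make this substitution has already been carried out in \S \ref{ogps}, where the type D analogues of Lemma \ref{CSk} and Proposition \ref{basis} were established.

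First I would apply (\ref{basicD}) with $k = a_1$. To justify this I must check that $w$ is increasing up to $a_1$: this is automatic when $a_1 \leq 1$, and when $a_1 \geq 2$ the compatibility of $w$ with $\fraka$ forces all descent positions to lie in $\fraka$, hence none are less than $a_1$. Since $Y_1 = \{y_1,\ldots,y_{a_1}\}$, the definition of the $[0,k]$ truncation gives $I^{(a_1)}_u(X\,;Y) = I_u(X\,;Y_1)$, so (\ref{basicD}) yields
\[
\DS_w(X\,;Y) = \sum_{u(1_{a_1}\times v) = w} I_u(X\,;Y_1)\,\AS_v(Y_{>a_1}),
\]
summed over reduced factorizations with $u$ increasing up to $a_1$ and $v \in S_\infty$.

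Next I would apply (\ref{SGsplitting}) to $\AS_v(Y_{>a_1})$ with respect to the shifted sequence $(a_2-a_1,\ldots,a_p-a_1)$. Compatibility of $w$ with $\fraka$ together with the form of the factorization $u(1_{a_1}\times v) = w$ forces $v$ to be compatible (in type A) with this shifted sequence, so (\ref{SGsplitting}) expands $\AS_v(Y_{>a_1})$ as a sum of products $G_{v_2}(Y_2)\cdots G_{v_p}(Y_p)$ over compatible reduced factorizations $v_2\cdots v_p = v$. Setting $u_1 := u$ and $u_i := 1_{a_1}\times v_i$ for $i \geq 2$, and using the shift-invariance $G_{1_{a_1}\times v_i}(Y_i) = G_{v_i}(Y_i)$, the two-stage sum reassembles into a single sum over reduced factorizations $u_1 \cdots u_p = w$ compatible with $\fraka$ and satisfying $u_2,\ldots,u_p \in S_\infty$, which is exactly the formula claimed.

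The main point requiring care is the bookkeeping for compatibility, especially in the boundary case $a_1 = 1$, where the type D definition of compatibility allows a descent of $w$ at position $0$ in addition to positions $1, a_2, \ldots, a_p$. This is absorbed harmlessly by the first factor $u_1$, since (\ref{basicD}) places no constraint on the descents of $u$ at positions less than $a_1$ when $a_1 \leq 1$, every element of $\wt{W}_\infty$ being automatically increasing up to $0$ and up to $1$.
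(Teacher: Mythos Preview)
Your proposal is correct and follows essentially the same approach as the paper, which simply remarks that the type D splitting theorems are obtained from (\ref{DStan}) and (\ref{basicD}) ``as in \S \ref{splitC}'' --- i.e., by running the proof of Theorem \ref{CSHG} with (\ref{basicD}) in place of (\ref{basic}). Your additional care with the $a_1=1$ boundary case is a useful elaboration that the paper leaves implicit.
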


Given any sequence of partitions
$\underline{\la}=(\la^1,\ldots,\la^{p+q-1})$ with $\la^q$ $a_1$-strict
and typed, and $w\in \wt{W}_\infty$ such that $w$ and $w^{-1}$ are
compatible with the sequences $\fraka$ and $\frakb$, respectively, we
define
\[
g^w_{\underline{\la}} = \sum_{u_1\cdots u_{p+q-1} = w}
c_{\la^1}^{u_1}\cdots c_{\la^{q-1}}^{u_{q-1}}
d_{\la^q}^{u_q}c_{\la^{q+1}}^{u_{q+1}}\cdots c_{\la^{p+q-1}}^{u_{p+q-1}}.
\]
Here the sum is over reduced factorizations $u_1\cdots u_{p+q-1} = w$
compatible with $\fraka$, $\frakb$ such that $u_i\in S_\infty$ for all
$i\neq q$, and the integers $c_{\la^i}^{u_i}$ and $d^{u_q}_{\la^q}$
are as in (\ref{Geq}) and (\ref{DStan}), respectively.

\begin{cor}
\label{dbleDSHG}
Suppose that $w$ and $w^{-1}$ are compatible with
the sequences $\fraka$ and $\frakb$, respectively. Then 
$\DS_w(X\,;Y,Z)$ is equal to 
\[
\sum_{u_1\cdots u_{p+q-1} = w} G_{u_1}(0/Z_q)\cdots
G_{u_{q-1}}(0/Z_2) I_{u_q}(X\,;Y_1) G_{u_{q+1}}(Y_2) \cdots
G_{u_{p+q-1}}(Y_p)
\]
summed over all reduced factorizations $u_1\cdots u_{p+q-1} = w$
compatible with $\fraka$, $\frakb$ such that $u_i\in S_\infty$ for
all $i\neq q$. Furthermore, we have
\begin{equation}
\label{dbDSsplitting}
\DS_w(X\,;Y,Z) = \sum_{\underline{\la}} 
g^w_{\underline{\la}}\,
s_{\la^1}(0/Z_q)\cdots
H_{\la^q}(X\,;Y_1)\cdots s_{\la^{p+q-1}}(Y_p)
\end{equation}
summed over all sequences of partitions 
$\underline{\la}=(\la^1,\ldots,\la^{p+q-1})$
with $\la^q$ $a_1$-strict and typed.
\end{cor}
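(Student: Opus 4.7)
The plan is to mirror the proof of Corollary \ref{dbleCSHG} step for step, substituting the type D ingredients at each stage. The three inputs needed are: (i) the type D analogue of (\ref{dbleC2}), namely
\[
\DS_w(X\,;Y,Z) = \sum_{uv=w,\, u\in S_\infty}\AS_{u^{-1}}(-Z)\DS_v(X\,;Y);
\]
(ii) the type A splitting (\ref{SGsplitting}); and (iii) Theorem \ref{DSHG}. The factorization in (i) is immediate from the definition
\[
\DS_w(X\,;Y,Z) = \left\langle \tilde{A}_{n-1}(z_{n-1})\cdots \tilde{A}_1(z_1)D(X) A_1(y_1)\cdots A_{n-1}(y_{n-1}), w\right\rangle,
\]
since the leading product $\tilde{A}_{n-1}(z_{n-1})\cdots \tilde{A}_1(z_1)$ lies in the subalgebra of $\wt{\cW}_n$ generated by $u_1,\ldots,u_{n-1}$; thus the only reduced factorizations contributing to $\DS_w$ are those whose $Z$-factor lies in $S_\infty$. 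This is exactly the argument used to derive (\ref{dbleC2}) from (\ref{dbleC}).

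Next, since $w^{-1}$ is compatible with $\frakb$, every reduced factorization $uv=w$ with $u\in S_\infty$ has $u^{-1}$ compatible with $\frakb$, so (\ref{SGsplitting}) splits $\AS_{u^{-1}}(-Z)$ over compatible factorizations of $u^{-1}$. Using $c_\la^\om = c^{\om^{-1}}_{\la'}$ together with $s_\la(0/Z) = (-1)^{|\la|}s_{\la'}(Z)$, one verifies the identity $G_v(0/Z) = G_{v^{-1}}(-Z)$, which lets us rewrite the resulting expansion, after reversing the order of factors, in the form $\sum G_{u_1}(0/Z_q)\cdots G_{u_{q-1}}(0/Z_2)$ indexed by compatible reduced factorizations of $u$. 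In parallel, Theorem \ref{DSHG} applied to $\DS_v(X\,;Y)$ (with $v$ compatible with $\fraka$) yields $\sum I_{u_q}(X\,;Y_1)G_{u_{q+1}}(Y_2)\cdots G_{u_{p+q-1}}(Y_p)$. Concatenating the two reduced factorizations at the seam $u_q$ and combining the two expansions gives the first displayed formula of the corollary.

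Finally, the closed form (\ref{dbDSsplitting}) is obtained by expanding each symmetric factor in the first formula: apply (\ref{Geq}) to write $G_{u_i}(Y_j)=\sum_{\la^i}c_{\la^i}^{u_i} s_{\la^i}(Y_j)$ and $G_{u_i}(0/Z_j)=\sum_{\la^i}c_{\la^i}^{u_i} s_{\la^i}(0/Z_j)$, and apply (\ref{DStan}) to expand $I_{u_q}^{(a_1)} = \sum_{\la^q} d^{u_q}_{\la^q} H_{\la^q}$ in typed $a_1$-strict eta polynomials. Collecting the coefficient of each product $s_{\la^1}(0/Z_q)\cdots H_{\la^q}(X\,;Y_1)\cdots s_{\la^{p+q-1}}(Y_p)$ reproduces exactly the integer $g^w_{\underline{\la}}$. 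The one piece of bookkeeping worth handling carefully is the correspondence between compatible reduced factorizations of $u^{-1}$ and of $u$, together with the sign arithmetic in $G_v(0/Z) = G_{v^{-1}}(-Z)$; once these are in place, the argument is entirely formal and parallel to the symplectic case.
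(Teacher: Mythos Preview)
Your proof is correct and follows precisely the route the paper intends: for the type C analogue (Corollary~\ref{dbleCSHG}) the paper's entire proof is ``follows immediately from equations (\ref{dbleC2}), (\ref{SGsplitting}), and Theorem~\ref{CSHG}'', and the type D corollary is left without proof as ``entirely analogous''. You have supplied exactly that analogue, together with the bookkeeping (the identity $G_v(0/Z)=G_{v^{-1}}(-Z)$ and the matching of compatible factorizations of $u$ and $u^{-1}$) that the paper leaves implicit.
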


In the same manner as for the symplectic groups, the above splitting
results imply Giambelli and degeneracy locus formulas for the
orthogonal groups. We use the Giambelli formula for even orthogonal
Grassmannians from \cite{BKT4} in (\ref{dbDSsplitting}).

Table \ref{schubtable} lists the Billey-Haiman Schubert polynomials
for the root systems of type $\text{C}_3$ and $\text{D}_3$ indexed by
the elements $w$ in the respective Weyl groups which are increasing up
to $1$. In each case, the polynomial is written as a positive sum of
$k=1$ theta and eta polynomials in the variables $(X, y_1)$ times
$s_j(y_2)$ for $j\in\{0,1\}$.  The primed eta polynomials $H'_{\la}$
are indexed by $1$-strict partitions $\la$ of type $2$.

{\small{
\begin{table}[t]
\caption{Schubert polynomials with $w$ increasing up to $1$}
\centering
\begin{tabular}{|c|c|c||c|c|c|} \hline
$w$ & word & $\CS_w(X\,;Y)$ & $w$ & word &
$\DS_w(X\,;Y)$ \\ \hline

$123$ &  & $1$ & 
$123$ &  & $1$ \\

$213$ & $1$ & $\Ti_1$ & 
$213$ & $1$ & $H_1$ \\

$132$ & $2$ & $\Ti_1 + y_2$ & 
$\ov{2}\ov{1}3$ & $0$ & $H'_1$ \\

$2\ov{1}3$ & $01$ & $\Ti_2$ & 
$132$ & $2$ & $H_1 + H'_1 + y_2$ \\

$312$ & $21$ & $\Ti_{(1,1)}$ & 
$\ov{1}\ov{2}3$ & $01$ & $H_2$ \\

$231$ & $12$ & $\Ti_2 + \Ti_1\, y_2$ & 
$312$ & $21$ & $H_{(1,1)}$ \\

$1\ov{2}3$ & $101$ & $\Ti_3$ &
$\ov{3}\ov{1}2$ & $20$ & $H'_{(1,1)}$ \\

$3\ov{1}2$ & $021$ & $\Ti_{(2,1)}$ &
$231$ & $12$ & $H_2 + H_1\, y_2$ \\

$321$ & $212$ & $\Ti_{(2,1)}+\Ti_{(1,1)}\, y_2$ &
$\ov{2}3\ov{1}$ & $02$ & $H_2 + H'_1 \, y_2$ \\

$23\ov{1}$ & $012$ & $\Ti_3+\Ti_2 \, y_2$ &
$\ov{1}\ov{3}2$ & $201$ & $H_3$ \\

$1\ov{3}2$ & $2101$ & $\Ti_4$ &
$3\ov{2}\ov{1}$ & $021$ & $H_{(2,1)}$ \\

$3\ov{2}1$ & $1021$ & $\Ti_{(3,1)}$ &
$\ov{3}\ov{2}1$ & $120$ & $H'_{(2,1)}$ \\

$13\ov{2}$ & $1012$ & $\Ti_4+\Ti_3 \, y_2$ &
$\ov{1}3\ov{2}$ & $012$ & $H_3 + H_2\, y_2$ \\

$32\ov{1}$ & $0212$ & 
$\Ti_4+\Ti_{(3,1)}+\Ti_{(2,1)}\, y_2$ &
$321$ & $212$ & $H_3 + H_{(2,1)} + H_{(1,1)}\, y_2$ \\

$2\ov{3}1$ & $21021$ & $\Ti_{(4,1)}$ &
$\ov{3}2\ov{1}$ & $202$ 
& $H_3 + H'_{(2,1)} + H'_{(1,1)}\, y_2$ \\

$3\ov{2}\ov{1}$ & $01021$ & $\Ti_{(3,2)}$ &
$2\ov{3}\ov{1}$ & $2021$ & $H_{(3,1)}$ \\

$12\ov{3}$ & $21012$ & $\Ti_5 + \Ti_4 \, y_2$ &
$\ov{2}\ov{3}1$ & $2120$ & $H'_{(3,1)}$ \\

$31\ov{2}$ & $10212$ & 
$\Ti_{(4,1)}+\Ti_{(3,1)} \, y_2$ &
$\ov{1}2\ov{3}$ & $2012$ & $H_4 + H_3\, y_2$ \\

$2\ov{3}\ov{1}$ & $021021$ & $\Ti_{(4,2)}$ &
$3\ov{1}\ov{2}$ & $0212$ 
& $H_{(3,1)}+ H_{(2,1)} \, y_2$ \\

$21\ov{3}$ & $210212$ & 
$\Ti_{(5,1)}+\Ti_{(4,1)} \, y_2$ &
$\ov{3}1\ov{2}$ & $1202$ 
& $H'_{(3,1)} + H'_{(2,1)}\, y_2$ \\

$3\ov{1}\ov{2}$ & $010212$ & 
$\Ti_{(4,2)} + \Ti_{(3,2)}\, y_2$ &
$1\ov{3}\ov{2}$ & $12021$ & $H_{(3,2)}$ \\

$1\ov{3}\ov{2}$ & $1021021$ & $\Ti_{(4,3)}$ &
$2\ov{1}\ov{3}$ & $20212$ 
& $H_{(4,1)} + H_{(3,1)}\, y_2$ \\

$2\ov{1}\ov{3}$ & $0210212$ & 
$\Ti_{(5,2)} + \Ti_{(4,2)}\, y_2$ &
$\ov{2}1\ov{3}$ & $21202$ 
& $H'_{(4,1)} + H'_{(3,1)}\, y_2$ \\

$1\ov{2}\ov{3}$ & $10210212$ & 
$\Ti_{(5,3)} +\Ti_{(4,3)}\, y_2 $ &
$1\ov{2}\ov{3}$ & $120212$ 
& $H_{(4,2)} + H_{(3,2)}\, y_2$ \\
\hline
\end{tabular}
\label{schubtable}
\end{table}}}

\end{document}